\newtheorem{pro}{Proposition}[section]
\newtheorem{thm}[pro]{Theorem}
\newtheorem{lem}[pro]{Lemma}
\newtheorem{cor}[pro]{Corollary}
\theoremstyle{definition}
\newtheorem{dfn}[pro]{Definition}
\theoremstyle{remark}
\newcommand{\VV}{\mathcal V}
\newcommand{\WW}{\mathcal W}
\newcommand{\CC}{\mathcal C}
\newcommand{\gen}{\mbox{\rm genus}}
\newcommand{\bdy}{\partial}
\newcommand{\thick}[1]{{\rm Thick}(#1)}
\newcommand{\thin}[1]{{\rm Thin}(#1)}
\newcommand{\amlg}[1]{\mathcal A(#1)}
\newcommand{\CV}{\mathcal V}
\newcommand{\CW}{\mathcal W}
\title{Heegaard splittings of sufficiently complicated 3-manifolds I: Stabilization} 
\date{\today}
\address{Pitzer College}
\email{bachman@pitzer.edu}
\author{David Bachman}
\begin{document}

\begin{abstract}
We construct families of pairs of Heegaard splittings that must be stabilized several times to become equivalent. The first such pair differs only by their orientation. These are genus $n$ splittings of a closed 3-manifold that must be stabilized at least $n-2$ times to become equivalent. The second is a pair of genus $n$ splittings of a manifold with toroidal boundary that must be stabilized at least $n-4$ times to become equivalent. The last example is a pair of genus $n$ splittings of a closed 3-manifold that must be stabilized at least $\frac{1}{2}n -3$ times to become equivalent, regardless of their orientations. All of these examples are splittings of manifolds that are obtained from simpler manifolds by gluing along incompressible surfaces via ``sufficiently complicated" maps. 
\end{abstract}

\maketitle

\markright{SPLITTINGS OF SUFFICIENTLY COMPLICATED 3-MANIFOLDS I}

\section{Introduction.}

Given a Heegaard surface $H$ in a 3-manifold, $M$, one can {\it stabilize} to obtain a splitting of higher genus by taking the connected sum of $H$ with the genus one splitting of $S^3$. Suppose $H_1$ and $H_2$ are Heegaard splittings of $M$, where $\rm{genus}(H_1) \ge \rm{genus}(H_2)$. It is a classical result of Reidemeister \cite{reidemeister} and Singer \cite{singer} from 1933 that as long as $H_1$ and $H_2$ induce the same partition of the components of $\partial M$, stabilizing $H_1$ some number of times produces a stabilization of $H_2$. Just one stabilization was proved to always be sufficient in large classes of 3-manifolds, including Seifert fibered spaces \cite{schultens:96}, genus two 3-manifolds \cite{rs:99}, and most graph manifolds \cite{Derby-Talbot2006} (see also \cite{sedgwick:97}). The lack of examples to the contrary led to ``The Stabilization Conjecture": Any pair of Heegaard splittings requires at most one stabilization to become equivalent. (See Conjecture 7.4 in \cite{ScharlemannSurvey}.)

The purpose of this paper is to produce several families of counter-examples to the Stabilization Conjecture. This work was announced in December of 2007 at a Workshop on {\it Triangulations, Heegaard Splittings, and Hyperbolic Geometry}, at the American Institute of Mathematics. At the same conference another family of counter-examples to the Stabilization Conjecture was announced by Hass, Thompson, and Thurston, and their preprint has since appeared on the arXiv \cite{HTT}. Their proof uses mainly geometric techniques. Several months later Johnson posted a preprint on the arXiv \cite{johnson} containing similar results that was motivated by this work, but is completely combinatorial. The proofs presented here are quite different than either of these. 

Here we construct three families of counter-examples. These are described by the following theorems:

\medskip

\noindent {\bf Theorem \ref{t:FlipCounterExample}.} {\it For each $n \ge 4$ there is a closed, orientable 3-manifold that has a genus $n$ Heegaard splitting which must be stabilized at least $n-2$ times to become equivalent to the splitting obtained from it by reversing its orientation.}

\medskip 

\noindent {\bf Theorem \ref{t:TorusBoundaryCounterExamples}.} {\it For each $n \ge 5$ there is an orientable 3-manifold whose boundary is a torus, that has two genus $n$ Heegaard splittings which must be stabilized at least $n-4$ times to become equivalent.}

\medskip

\noindent {\bf Theorem \ref{t:ClosedCounterExamples}.} {\it For each $n \ge 8$ there is a closed, orientable 3-manifold that has a pair of genus $n$ Heegaard splittings which must be stabilized at least $\frac{1}{2}n -3$ times to become equivalent (regardless of their orientations).}

\medskip

The key to the constructions of the counter-examples given in \cite{HTT}, \cite{johnson} and \cite{johnson2}  is to use Heegaard splittings formed by gluing together two handlebodies by a very complicated homeomorphism. Such splittings have very high Hempel distance \cite{hempel:01}. In contrast, the examples constructed here are splittings of manifolds that are constructed from two or more component manifolds by gluing along incompressible surfaces via a very complicated map. The splittings themselves come from amalgamations of splittings of the component manifolds, and so have Hempel distance at most one.

This paper is organized as follows. In Sections \ref{s:FirstDefSection} through \ref{s:LastDefSection} we mostly review the definitions and results given in \cite{gordon}. These include {\it critical surfaces}, {\it Generalized Heegaard splittings} (GHSs), and {\it Sequences of GHSs} (SOGs). In Section \ref{s:BarrierSurfaces} we review the main result from \cite{barrier}, which says that complicated amalgamations act as barriers to low genus incompressible, strongly irreducible, and critical surfaces. 
%Here we also recover a result of Lackenby \cite{lackenby:04}, Souto \cite{souto}, and Li \cite{li:08}, which says that low genus Heegaard splittings of manifolds formed via a complicated gluing must be amalgamations of splittings of the component manifolds. 
In Section \ref{s:CounterExamples} we use all of this machinery to construct our counter-examples to the Stabilization Conjecture. In the sequel \cite{AmalgamationResults} we use the machinery presented in Sections \ref{s:FirstDefSection} through \ref{s:BarrierSurfaces} to show that ``sufficiently complicated" amalgamations of unstabilized, boundary-unstabilized Heegaard splittings are also unstabilized.

\section{Incompressible, Strongly Irreducible, and Critical surfaces}
\label{s:FirstDefSection}

In this section we recall the definitions of various classes of topologically interesting surfaces. The first are the {\it incompressible} surfaces of Haken \cite{haken:68}, which have played a central role in 3-manifold topology. The second class are the {\it strongly irreducible} surfaces of Casson and Gordon \cite{cg:87}. These surfaces have become important in answering a wide variety of questions relating to the Heegaard genus of 3-manifolds. The third class are the {\it critical surfaces} of \cite{crit} and \cite{gordon}. 

In \cite{TopIndexI} we show that all three of these classes are special cases of {\it topologically minimal} surfaces. Such surfaces are the topological analogue of geometrically minimal surfaces. We will say more about this in Section \ref{s:BarrierSurfaces}. 

\begin{dfn}
\label{d:essential}
Let $F$ be a properly embedded surface in $M$. Let $\gamma$ be a loop in $F$. $\gamma$ is {\it essential} on $F$ if it is a loop that does not bound a disk in $F$.  A {\it compression} for $F$ is a disk, $D$, such that $D \cap F=\bdy D$ is essential on $F$.
\end{dfn}

\begin{dfn}
Let $F$ be a properly embedded surface in $M$. The surface $F$ is {\it compressible} if there is a compression for it. Otherwise it is {\it incompressible}.
\end{dfn}

\begin{dfn}
Let $H$ be a separating, properly embedded surface in $M$. Let $V$ and $W$ be compressions on opposite sides of $H$. Then we say $(V,W)$ is a {\it weak reducing pair} for $H$ if $V \cap W=\emptyset$. 
\end{dfn}

\begin{dfn}
\label{d:TTStrongIrreducibility}
Let $H$ be a separating, properly embedded surface in $M$ which is not a torus. Then we say $H$ is {\it strongly irreducible} if there are compressions on opposite sides of $H$, but no weak reducing pairs. 
\end{dfn}

\begin{dfn}
\label{d:critical}
Let $H$ be a properly embedded, separating surface in $M$. The surface $H$ is {\it critical} if the compressions for $H$ can be partitioned into sets $C_0$ and $C_1$ such that:
\begin{enumerate}
	\item For each $i=0,1$ there is at least one pair of disks $V_i, W_i \in C_i$ such that $(V_i,W_i)$ is a weak reducing pair. 
	\item If $V \in C_0$ and $W \in C_1$ then $(V,W)$ is not a weak reducing pair.
\end{enumerate}
\end{dfn}

\section{Generalized Heegaard Splittings}

In this section we define {\it Heegaard splittings} and {\it Generalized Heegaard Splitting}. The latter structures were first introduced by Scharlemann and Thompson \cite{st:94} as a way of keeping track of handle structures. The definition we give here is more consistent with the usage in \cite{gordon}.

\begin{dfn}
A {\it compression body} $\mathcal C$ is a manifold formed in one of the following two ways:
	\begin{enumerate}
		\item Starting with a 0-handle, attach some number of 1-handles. In this case we say $\bdy _- \mathcal C=\emptyset$ and $\bdy _+ \mathcal C=\bdy \mathcal C$. 
		\item Start with some (possibly disconnected) surface $F$ such that each component has positive genus. Form the product $F \times I$. Then attach some number of 1-handles to $F \times \{1\}$. We say $\bdy _- \mathcal C= F \times \{0\}$ and $\bdy _+ \mathcal C$ is the rest of $\bdy \mathcal C$. 
	\end{enumerate}
\end{dfn}

\begin{dfn}
\label{d:Heegaard}
Let $H$ be a properly embedded, transversally oriented surface in a 3-manifold $M$, and suppose $H$ separates $M$ into $\VV$ and $\WW$. If $\VV$ and $\WW$ are compression bodies and $\VV \cap \WW=\partial _+ \VV=\partial _+ \WW=H$, then we say $H$ is a {\it Heegaard surface} in $M$.
\end{dfn}

\begin{dfn}
The transverse orientation on the Heegaard surface $H$ in the previous definition is given by a choice of normal vector. If this vector points into $\VV$, then we say any subset of $\VV$ is {\it above} $H$ and any subset of $\WW$ is {\it below} $H$. 
\end{dfn}

%Not sure where to put this:
\begin{dfn}
Suppose $H$ is a Heegaard splitting of a manifold $M$ with non-empty boundary. Let $F$ denote a component of $\bdy M$. Then the surface $H'$ obtained from $H$ by attaching a copy of $F$ to it by an unknotted tube is also a Heegaard surface in $M$. We say $H'$ was obtained from $H$ by a {\it boundary-stabilization along $F$}. The reverse operation is called a {\it boundary-destabilization} along $F$. 
\end{dfn}

\begin{dfn}
\label{d:GHS}
A {\it generalized Heegaard splitting (GHS)} $H$ of a 3-manifold $M$ is a pair of sets of transversally oriented, connected, properly embedded surfaces,  $\thick{H}$ and $\thin{H}$ (called the {\it thick levels} and {\it thin levels}, respectively), which satisfy the following conditions. 
	\begin{enumerate}
		\item Each component $M'$ of $M \setminus \thin{H}$ meets a unique element $H_+$ of $\thick{H}$. The surface $H_+$ is a Heegaard surface in $\overline{M'}$ dividing $\overline{M'}$ into compression bodies $\VV$ and $\WW$. Each component of $\bdy _- \VV$ and $\bdy _- \WW$ is an element of $\thin{H}$. Henceforth we will denote the closure of the component of $M \setminus \thin{H}$ that contains an element $H_+ \in \thick{H}$ as $M(H_+)$. 
		\item Suppose $H_-\in \thin{H}$. Let $M(H_+)$ and $M(H_+')$ be the submanifolds on each side of $H_-$. Then $H_-$ is below $H_+$ in $M(H_+)$ if and only if it is above $H_+'$ in $M(H_+')$.
		\item The term ``above" extends to a partial ordering on the elements of $\thin{H}$ defined as follows. If $H_-$ and $H'_-$ are subsets of $\bdy M(H_+)$, where $H_-$ is above $H_+$ in $M(H_+)$ and $H_-'$ is below $H_+$ in $M(H_+)$, then $H_-$ is above $H_-'$ in $M$.
	\end{enumerate}
\end{dfn}

\begin{dfn}
Suppose $H$ is a GHS of an irreducible 3-manifold $M$. Then $H$ is {\it strongly irreducible} if each element $H_+ \in \thick{H}$ is strongly irreducible in $M(H_+)$. The GHS $H$ is {\it critical} if each element $H_+ \in \thick{H}$ but exactly one is strongly irreducible in $M(H_+)$, and the remaining element is critical in $M(H_+)$. 
\end{dfn}

The strongly irreducible case of the following result is due to Scharlemann and Thompson \cite{st:94}. The proof in the critical case is similar. 

\begin{thm}
\label{t:IncompressibleThinLevels}{\rm (\cite{gordon}, Lemma 4.6)} 
Suppose $H$ is a strongly irreducible or critical GHS of an irreducible 3-manifold $M$. Then each thin level of $H$ is incompressible.
\end{thm}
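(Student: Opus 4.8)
The plan is to argue by contradiction, adapting the Scharlemann--Thompson argument so as to reduce the assertion about thin levels to one about a single thick level. Suppose a thin level $F$ of $H$ is compressible, and let $D$ be a compression for $F$, chosen to meet $\thin{H}$ minimally. An innermost circle of $D\cap\thin{H}$ in $D$ bounds a subdisk $D'$ whose interior misses $\thin{H}$ and hence lies in a single region $M(H''_+)$. If $\partial D'$ is inessential on the thin level containing it, I would isotope $D$ across the disk it bounds there, using irreducibility of $M$ to fill in the resulting sphere, contradicting minimality; otherwise $D'$ is itself a compression for a thin level lying in one region. After renaming I may therefore assume $D\subset M(H_+)$ for a thick level $H_+$, with $\partial D$ essential on a component $F$ of $\partial M(H_+)$; writing $M(H_+)=\mathcal V\cup_{H_+}\mathcal W$ and relabelling the sides, say $F\subset\partial_-\mathcal V$. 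In a critical GHS the thick level $H_+$ reached this way is either one of the strongly irreducible ones or the unique critical one; the two possibilities are treated separately.

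Next I would use the compression body structure of $\mathcal V$. Because $\partial_-\mathcal V$ is incompressible in $\mathcal V$, the disk $D$ cannot be isotoped into $\mathcal V$, so $D\cap H_+\neq\emptyset$; and because $\partial D\subset F$ is disjoint from $H_+$, the set $D\cap H_+$ consists only of circles. Taking $D$ to also minimize $|D\cap H_+|$, each of these circles is essential on $H_+$ (an innermost inessential one could be removed using irreducibility of $M$). By standard further surgeries of $D$ --- eliminating pairs of circles of $D\cap H_+$ that cobound an unused annulus on $H_+$, and boundary-compressing, inside $\mathcal V$, the planar component $R_0$ of $D\setminus H_+$ that contains $\partial D$ --- I would arrange that $R_0$ is a spanning annulus of $\mathcal V$, meeting $H_+$ in a single essential curve $c$ and meeting $\partial_-\mathcal V$ in $\partial D$. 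Then $c$ bounds a compression $W_0$ for $H_+$ in $\mathcal W$, obtained by surgering the complementary subdisk of $D$ along innermost disks, and $R_0$ exhibits $c$ as isotopic in $\mathcal V$ to the curve $\partial D$ on $\partial_-\mathcal V$, so there is a complete meridian system $\{\Delta_i\}$ for $\mathcal V$ disjoint from $c$ and hence from $W_0$. Each $\Delta_i$ is a compression for $H_+$ in $\mathcal V$, so $(\Delta_i,W_0)$ is a weak reducing pair for $H_+$. This contradicts strong irreducibility of $H_+$ in $M(H_+)$, which settles the strongly irreducible case and also every thick level of a critical GHS other than the critical one.

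It remains to treat the case in which $H_+$ is the critical thick level of a critical GHS. The reductions above go through unchanged and produce from $D$ a compression $W_0$ for $H_+$ in $\mathcal W$ together with a complete meridian system $\{\Delta_i\}$ for $\mathcal V$, all of the $\Delta_i$ disjoint from $W_0$. The remaining task is to show that such a configuration is incompatible with any partition $C_0\sqcup C_1$ of the compressions of $H_+$ satisfying conditions (1) and (2) of the definition of a critical surface: placing $W_0$ in, say, $C_0$, condition (2) forces every $\Delta_i$ into $C_0$, and one must then derive a contradiction with the weak reducing pair guaranteed in $C_1$ by condition (1), which seems to require controlling how that pair can sit relative to the complete meridian system $\{\Delta_i\}$ and, most likely, a structural property of critical surfaces rather than just their definition. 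I expect this last step --- faithfully tracking the two-colouring of the disk complex of $H_+$ through the reduction, rather than the geometric reduction itself --- to be the main obstacle, and it is precisely the point at which the critical case goes beyond a verbatim repetition of the Scharlemann--Thompson argument.
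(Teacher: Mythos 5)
The paper does not give a proof of this statement; it is quoted from \cite{gordon}, Lemma~4.6, with the remark that the strongly irreducible case is Scharlemann--Thompson and the critical case is ``similar.'' Your treatment of the strongly irreducible case is a faithful reconstruction of the Scharlemann--Thompson reduction, and your reduction to a single region $M(H_+)$ is correct. But you explicitly leave the critical case unfinished, and that gap is genuine: as written, the proof does not establish the theorem for a critical GHS.

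The step you are missing is, however, shorter than you anticipate, and it does not require tracking a two-colouring of the disk complex through the reduction. The reduction you carried out hands you an \emph{incompressible spanning annulus} $R_0\subset\CV$ with one boundary component $c=\partial W_0$ on $H_+$ and the other, $\partial D$, an essential curve on $\partial_-\CV$. Now let $V$ be \emph{any} compression for $H_+$ in $\CV$, and isotope $V$ to minimize $|V\cap R_0|$. Circles of intersection are impossible: an innermost one on $V$ would either compress $R_0$ (impossible, since $\partial D$ is essential on the incompressible surface $\partial_-\CV$) or be removable using irreducibility of $M$. Arcs are likewise impossible: every properly embedded arc in an annulus with both endpoints on the same boundary circle cuts off a disk from the annulus, so an outermost such arc on $V$ permits an isotopy reducing $|V\cap R_0|$. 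Hence $V$ may be taken disjoint from $R_0$, and in particular from $c=\partial W_0$; so $(V,W_0)$ is a weak reducing pair, and by condition~(2) of Definition~\ref{d:critical}, $V$ lies in the same set $C_0$ as $W_0$. Since $V$ was an arbitrary compression on the $\CV$ side, the set $C_1$ contains no compression at all on the $\CV$ side, and therefore cannot contain a weak reducing pair --- contradicting condition~(1). Note that with this lemma in hand, the complete meridian system $\{\Delta_i\}$ you constructed is not actually needed in the critical case; the single annulus $R_0$ already does all the work.
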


\section{Reducing GHSs}

\begin{dfn}
Let $H$ be an embedded surface in $M$.  Let $D$ be a compression for $H$. Let $\VV$ denote the closure of the component of $M \setminus H$ that contains $D$. (If $H$ is non-separating then $\VV$ is the manifold obtained from $M$ by cutting open along $H$.) Let $N$ denote a regular neighborhood of $D$ in $\VV$. To {\it surger} or {\it compress} $H$ along $D$ is to remove $N \cap H$ from $H$ and replace it with the frontier of $N$ in $\VV$. We denote the resulting surface by $H/D$. 
\end{dfn}

It is not difficult to find a complexity for surfaces which decreases under compression. Incompressible surfaces then represent ``local minima" with respect to this complexity. We now present an operation that one can perform on GHSs that also reduces some complexity (see Lemma 5.14 of \cite{gordon}).  Strongly irreducible GHSs will then represent ``local minima" with respect to such a complexity. This operation is called {\it weak reduction}.

\begin{dfn}
Let $H$ be a properly embedded surface in $M$. If $(D,E)$ is a weak reducing pair for $H$, then we let $H/DE$ denote the result of simultaneous surgery along $D$ and $E$.
\end{dfn}

\begin{dfn}
\label{d:PreWeakReduction}
Let $M$ be a compact, connected, orientable 3-manifold. Let $G$ be a GHS. Let $(D,E)$ be a weak reducing pair for some  $G_+ \in \thick{G}$. Define 
	\[T(H)=\thick{G} -\{G_+\} \cup \{G_+ / D, G_+ / E\},\ \mbox{and}\]
	\[t(H) =\thin{G} \cup \{G_+/DE\}.\]

A new GHS $H=\{\thick{H},\thin{H}\}$ is then obtained from $\{T(H), t(H)\}$ by successively removing the following:
	\begin{enumerate}
		\item Any sphere element $S$ of $T(H)$ or $t(H)$ that is inessential, along with any elements of $t(H)$ and $T(H)$ that lie in the ball that it (co)bounds. 
		\item Any element $S$ of $T(H)$ or $t(H)$ that is $\bdy$-parallel, along with any elements of $t(H)$ and $T(H)$ that lie between $S$ and $\bdy M$. 
		\item Any elements $H_+ \in T(H)$ and $H_- \in t(H)$, where $H_+$ and $H_-$ cobound a submanifold $P$ of $M$, such that $P$ is a product, $P \cap T(H)=H_+$, and $P \cap t(H)=H_-$. 
	\end{enumerate}

We say the GHS $H$ is obtained from $G$ by {\it weak reduction} along $(D,E)$.
\end{dfn}

The first step in weak reduction is illustrated in Figure \ref{f:WeakReduction}. 

        \begin{figure}[htbp]
        \psfrag{1}{$G_+/D$}
        \psfrag{2}{$G_+/E$}
        \psfrag{3}{$G_+/DE$}
        \psfrag{G}{$G_+$}
        \psfrag{E}{$E$}
        \psfrag{D}{$D$}
        \vspace{0 in}
        \begin{center}
       \includegraphics[width=3.5 in]{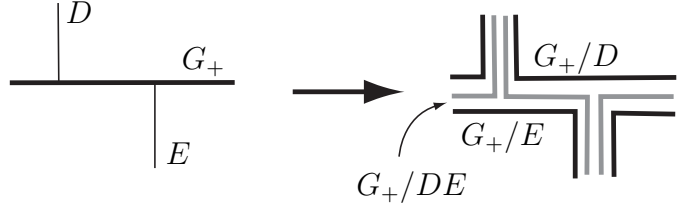}
       \caption{The first step in weak reduction.}
        \label{f:WeakReduction}
        \end{center}
        \end{figure}

\begin{dfn}
\label{d:destabilization}
The weak reduction of a GHS given by the weak reducing pair $(D,E)$ for the thick level $G_+$ is called a {\it destabilization} if $G_+/DE$ contains a sphere.
\end{dfn} 

In the next section we give a coarse measure of complexity for GHSs called {\it genus}. Destabilizations are precisely those weak reductions that reduce genus.

\section{Amalgamations}

Let $H$ be a GHS of a connected 3-manifold $M$. In this section we use $H$ to produce a complex that is the spine of a Heegaard splitting of $M$. We call this splitting the {\it amalgamation} of $H$. Most of this material is reproduced from \cite{gordon}. First, we must introduce some new notation. 

\begin{dfn}
Let $H$ be a Heegaard surface in $M$. Let $\Sigma$ denote a properly embedded graph in $M$. Let $(\bdy M) '$ denote the union of the boundary components of $M$ that meet $\Sigma$. Then we say $(\bdy M)' \cup \Sigma$ is a {\it spine} of $H$ if the frontier of a neighborhood of $(\bdy M)' \cup \Sigma$ is isotopic to $H$.
\end{dfn}

Suppose $H$ is a GHS of $M$ and $H_{+}\in \thick{H}$. Recall that $H_{+}$ is transversely oriented, so that we may consistently talk about those points of $M(H_+)$ that are ``above" $H_+$ and those points that are ``below." The surface $H_+$ divides $M(H_+)$ into two compression bodies. Henceforth we will denote these compression bodies as $\CV(H_+)$ and $\CW(H_+)$, where $\CV(H_+)$ is below $H_+$ and $\CW(H_+)$ is above. When we wish to make reference to an arbitrary compression body which lies above or below some thick level we will use the notation $\CV$ and $\CW$. Define $\partial _- M(H_+)$ to be $\partial _- \CV(H_+)$ and $\partial _+ M(H_+)$ to be $\partial _- \CW (H_+)$. That is, $\partial _- M(H_+)$ and $\partial _+ M(H_+)$ are the boundary components of $M(H_+)$ that are below and above $H_+$, respectively. If $N$ is a union of manifolds of the form $M(H_i)$ for some set of thick levels $\{H_i\} \subset \thick{H}$ then we let $\bdy _{\pm} N$ denote the union of those boundary components of $N$ that are components of $\bdy _{\pm} M(H_i)$, for some $i$. 

We now define a sequence of manifolds $\{M_i\}$ where
\[M_0 \subset M_1 \subset ... \subset M_n=M.\]
The submanifold $M_0$ is defined to be the disjoint union of all manifolds of the form $M(H_+)$, such that  $\partial _- M(H_+) \subset \bdy M$. The fact that the thin levels of $H$ are partially ordered guarantees $M_0 \ne \emptyset$. Now, for each $i$ we define $M_i$ to be the union of $M_{i-1}$ and all manifolds $M(H_+)$ such that $\partial _- M(H_+) \subset \partial M_{i-1} \cup \bdy M$. Again, it follows from the partial ordering of thin levels that for some $i$ the manifold $M_i=M$. 

We now define a sequence of complexes $\Sigma_i$ in $M$. The final element of this sequence will be a complex $\Sigma$. This complex will be a spine of the desired Heegaard surface. The intersection of $\Sigma$ with some $M(H_+)$ is depicted in Figure \ref{f:Amalgam}.

Each $\CV \subset M_0$ is a compression-body. Choose a spine of each, and let $\Sigma'_0$ denote the union of these spines. The complement of $\Sigma'_0$ in $M_0$ is a (disconnected) compression body, homeomorphic to the union of the compression bodies $\CW \subset M_0$. Now let $\Sigma_0$ be the union of $\Sigma'_0$ and one vertical arc for each component $H_-$ of $\partial _+ M_0$,  connecting $H_-$ to $\Sigma' _0$. 

We now assume $\Sigma _{i-1}$ has been constructed and we construct $\Sigma_i$. Let $M_i'=\overline{M_i - M_{i-1}}$. For each compression body $\CV \subset M_i'$ choose a set of arcs $\Gamma \subset \CV$ such that $\partial \Gamma \subset \Sigma _{i-1} \cap \partial M_{i-1}$, and such that the complement of $\Gamma$ in $\CV$ is a product. Let $\Sigma' _i$ be the union of $\Sigma _{i-1}$ with all such arcs $\Gamma$, and all components of $\bdy _- \CV$ that are contained in $\bdy M$. Now let $\Sigma_i$ be the union of $\Sigma'_i$ and one vertical arc for each component $H_-$ of $\partial _+ M_i$, connecting $H_-$ to $\Sigma' _i$. 

        \begin{figure}[htbp]
        \psfrag{h}{$H_+$}
        \psfrag{W}{$\CV(H_+)$}
        \psfrag{w}{$\CW(H_+)$}
        \psfrag{S}{$\Sigma$}
        \vspace{0 in}
        \begin{center}
       \includegraphics[width=3 in]{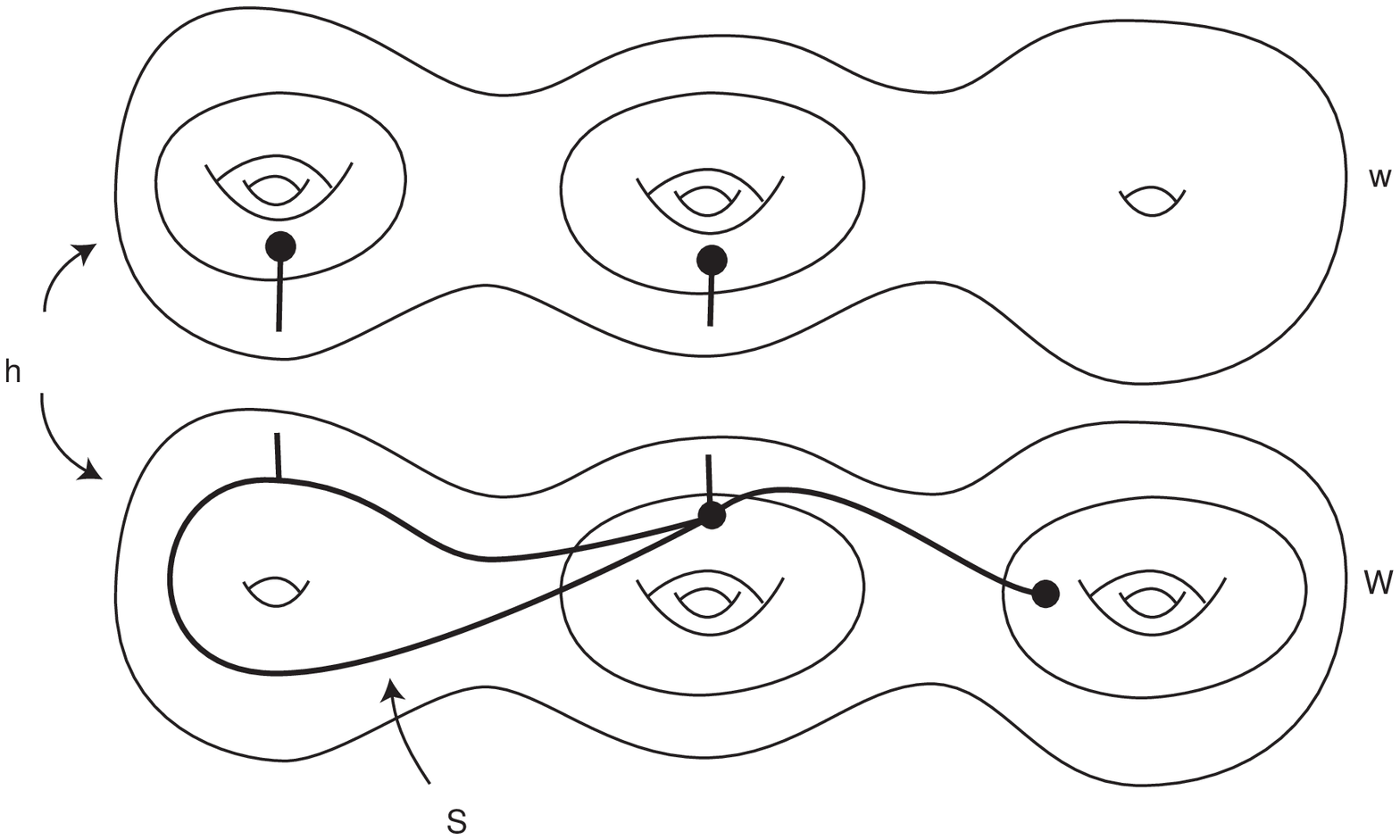}
       \caption{The intersection of $\Sigma$ with $\CV(H_+)$ and $\CW(H_+)$.}
        \label{f:Amalgam}
        \end{center}
        \end{figure}

\begin{lem}
{\rm (\cite{gordon}, Lemma 7.2)}  If $H$ is a GHS of $M$ then the complex $\Sigma$ defined above is the spine of a Heegaard splitting of $M$.
\end{lem}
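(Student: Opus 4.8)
I would prove, by induction on $i$, that $\Sigma_i$ together with the union $(\partial M_i)'$ of the components of $\partial M_i=(\partial M\cap M_i)\cup\partial_+M_i$ that it meets is a spine of a Heegaard surface $G_i$ of $M_i$; equivalently, that $\mathcal V_i:=N(\Sigma_i\cup(\partial M_i)')$ and $\mathcal W_i:=\overline{M_i\setminus\mathcal V_i}$ are compression bodies meeting along $\partial_+\mathcal V_i=\partial_+\mathcal W_i=G_i$. Since $M_n=M$ and $\partial_+M_n\subseteq\partial M$, the case $i=n$ is exactly the lemma. Along the way it is convenient to carry as part of the hypothesis that every component of $\partial_+M_i$ is a component of $\partial_-\mathcal V_i$, since these are the surfaces along which the next batch of pieces is glued on.

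For the base case, $M_0$ is a disjoint union of pieces $M(H_+)$ with $\partial_-M(H_+)\subset\partial M$. On each, $\Sigma'_0$ is a spine of the compression body $\CV(H_+)$, so its regular neighbourhood is $\CV(H_+)$ and the complement in $M(H_+)$ is the compression body $\CW(H_+)$, the two meeting along $H_+$. Adjoining, for each component $H_-$ of $\partial_+M(H_+)=\partial_-\CW(H_+)$, the surface $H_-$ together with a vertical arc joining it to $\Sigma'_0$ glues a collar $H_-\times I$ to the $\CV$--side by a $1$--handle running across $\CW(H_+)$, and correspondingly deletes from the $\CW$--side a regular neighbourhood of an unknotted arc from $\partial_-\CW(H_+)$ to $\partial_+\CW(H_+)$. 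A collar is a trivial compression body; a $1$--handle sum of compression bodies along their positive boundaries is a compression body; and deleting such an arc from a compression body again yields one. Hence the base case holds, with $G_0$ equal to $\bigsqcup H_+$ with the thin levels of $\partial_+M_0$ tubed on.

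For the inductive step, write $M'_i=\overline{M_i\setminus M_{i-1}}$, a disjoint union of pieces $M(H_+)$, each glued to $M_{i-1}$ along the components of $\partial_-\CV(H_+)$ lying in $\partial_+M_{i-1}$; by hypothesis each such surface is a component of $\partial_-\mathcal V_{i-1}$, so $\mathcal V_{i-1}$ contains a full collar of it. Passing from $\mathcal V_{i-1}$ to $\mathcal V_i$ then amounts to: attaching to these collars the $1$--handles $N(\Gamma)$ dual to the handles of each $\CV(H_+)\subset M'_i$; attaching collars of the components of $\partial_-\CV(H_+)$ in $\partial M$; and performing, through each $\CW(H_+)\subset M'_i$, the same $1$--handle/arc operation on $\partial_+M(H_+)$ as in the base case. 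Dually, $\mathcal W_i$ is $\mathcal W_{i-1}$ with the product regions $\CV(H_+)\setminus N(\Gamma)\cong\partial_-\CV(H_+)\times I$ and the thus-modified $\CW(H_+)$ adjoined. I would then invoke the elementary facts that (i) a regular neighbourhood of a spine of a compression body is that compression body; (ii) the complement of a complete dual arc system in a compression body is a product; (iii) $1$--handle sums along positive boundaries, gluings of a product onto a boundary component, and excisions of unknotted $\partial_-$--to--$\partial_+$ arcs each produce compression bodies; and (iv) these operations are dual on the two sides and leave them meeting exactly along a common positive boundary, to conclude that $\mathcal V_i$ and $\mathcal W_i$ are compression bodies meeting along $G_i$ (which is $G_{i-1}$ with the new thick levels amalgamated in). Connectedness of $M$ together with the partial ordering of the thin levels --- which strings the pieces of $\Sigma$ together through the vertical arcs --- shows $G_n$ is connected, so it is a genuine Heegaard surface.

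The step I expect to be the main obstacle is (iii)--(iv): keeping accurate track, at each thin level and each boundary component, of which subsurfaces of $\partial M_i$ become negative boundary of $\mathcal V_i$ and which of $\mathcal W_i$, and checking that the local gluings assemble into \emph{global} compression bodies --- not merely manifolds with the correct boundary --- with a single connected $G_i$ appearing as positive boundary on both sides. This is a lengthy but essentially mechanical exercise in the handle calculus of compression bodies, and it is the reason the inductive statement has to be set up with some care.
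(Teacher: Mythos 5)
The paper does not supply its own proof of this lemma; it is quoted verbatim from \cite{gordon}, Lemma~7.2, so there is no internal argument to compare yours against. Your staged induction --- proving that $\Sigma_i \cup (\partial M_i)'$ is a spine of a Heegaard surface of $M_i$, while carrying along the auxiliary invariant that every component of $\partial_+ M_i$ lies in $\partial_- \mathcal V_i$ so that the arcs $\Gamma$ of the next stage have something in $\mathcal V_{i-1}$ to attach to --- is the natural one, tracks the construction of the $\Sigma_i$ and $M_i$ precisely, and is to the best of my knowledge essentially the argument of the cited reference. Two places deserve explicit care rather than being folded into the ``mechanical exercise.'' First, when a piece $\CV(H_+) \subset M_i'$ has a negative boundary component in $\partial M$, the arcs $\Gamma$ cannot be anchored there; the construction compensates by adding that surface to $\Sigma'_i$ outright, and your regular-neighborhood description of $\mathcal V_i$ must include the resulting collar (you gesture at this, but it is the easiest spot to garble). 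Second, your closing claim that connectedness of $M$ gives connectedness of $G_n$ is not automatic from the fact that the two sides are compression bodies; you actually need that $\Sigma$ itself is connected, which does follow from the partial ordering of the thin levels together with the vertical arcs linking each stage to the next, but it deserves an explicit sentence rather than an aside.
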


\begin{dfn}
\label{d:amalgam}
Let $H$ be a GHS and $\Sigma$ be the complex in $M$ defined above. The Heegaard splitting that $\Sigma$ is a spine of is called the {\it amalgamation} of $H$ and will be denoted $\amlg{H}$.
\end{dfn}

Note that although the construction of the complex $\Sigma$ involved some choices, its neighborhood is uniquely defined up to isotopy at each stage. Hence, the amalgamation of a GHS is well defined, up to isotopy. 

For the next lemma, recall the definition of {\it destabilization}, given in Definition \ref{d:destabilization}.

\begin{lem}
\label{l:AmalgGenus}
{\rm (\cite{gordon}, Corollary 7.5)}  Suppose $M$ is irreducible, $H$ is a GHS of $M$ and $G$ is obtained from $H$ by a weak reduction which is not a destabilization. Then $\amlg{H}$ is isotopic to $\amlg{G}$.
\end{lem}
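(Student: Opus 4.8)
The plan is to track carefully how the spine-construction of Section 5 changes when we pass from the GHS $H$ to the GHS $G$ obtained by weak reduction along a weak reducing pair $(D,E)$ for a thick level $H_+$, and to exhibit an explicit isotopy of the resulting spines (equivalently, of their regular neighborhoods, which is all that matters since the amalgamation is well-defined only up to isotopy). The key observation is that weak reduction is a \emph{local} operation: it only modifies $M(H_+)$, replacing the single Heegaard surface $H_+$ of $M(H_+)$ by the three surfaces $H_+/D$, $H_+/E$ (new thick levels) and $H_+/DE$ (a new thin level), and leaving every other $M(H_i)$ untouched. So it suffices to analyze what happens inside $M(H_+)$ and its immediate neighbors, and to observe that the portion of $\Sigma$ lying outside a neighborhood of $M(H_+)$ can be taken to be literally the same for $H$ and for $G$.

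First I would set up notation for the compression bodies $\CV = \CV(H_+)$ and $\CW = \CW(H_+)$ with $H_+ = \partial_+\CV = \partial_+\CW$, with $D \subset \CV$ and $E \subset \CW$ (relabeling as needed so the two disks of the weak reducing pair lie on opposite sides). After surgery, $M(H_+)$ is cut by the thin surface $H_+/DE$ into two pieces: the piece below contains $H_+/D$ as a Heegaard surface and is built from $\CV$ by the surgery along $D$, and the piece above contains $H_+/E$ and is built from $\CW$ by surgery along $E$. Concretely, compressing $\CV$ along the disk $D$ (which may be either a ``$1$-handle-dual'' disk or not) produces a compression body whose positive boundary is $H_+/D$ and whose negative boundary is $\partial_-\CV$ together with $H_+/DE$; the key point, which I would verify by a direct handle-structure argument, is that a spine of this new compression body below $H_+/DE$ together with a spine of the new compression body built from $\CW$ above $H_+/DE$, joined by the single vertical arc that the amalgamation construction adds across the new thin level $H_+/DE$, has a regular neighborhood isotopic in $M(H_+)$ to a regular neighborhood of the old spine pieces coming from $\CV$, $H_+$, and $\CW$. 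This is essentially the statement that ``amalgamating back across $H_+/DE$ recovers $H_+$,'' and it is the heart of why the Heegaard surface does not change. Here is where the hypothesis that the weak reduction is \emph{not} a destabilization enters: if $H_+/DE$ contained a sphere, step (1) of Definition \ref{d:PreWeakReduction} would delete pieces and collapse a ball, genuinely dropping the genus; ruling this out guarantees that the surgered compression bodies have the same combined Euler characteristic, so the Heegaard genus computed on either side matches and the neighborhoods really are isotopic rather than merely cobordant.

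Then I would assemble the global picture: choose the sequence $M_0 \subset \cdots \subset M_n = M$ and the spines $\Sigma_i$ for the GHS $G$ so that they agree with a choice for $H$ on every $M(H_i)$ with $H_i \neq H_+$, and use the local analysis above on $M(H_+)$. Because the negative boundary components of the surgered pieces are exactly $\partial_-\CV$, $\partial_-\CW$, and the new thin surface $H_+/DE$ (which sits in the interior of the old $M(H_+)$), the partial order on thin levels and hence the nesting $M_0 \subset \cdots \subset M$ is compatible, and the vertical arcs added across $\partial_+ M_i$ away from $H_+$ are unchanged. Concatenating the local isotopy inside $M(H_+)$ with the identity outside gives an ambient isotopy of $M$ carrying $\Sigma(G)$'s neighborhood to $\Sigma(H)$'s neighborhood, hence $\amlg{H}$ is isotopic to $\amlg{G}$. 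I expect the main obstacle to be the bookkeeping in the local compression-body computation — checking in each case (according to whether $D$, $E$ are handle-dual disks or ``product'' disks, and whether surgery disconnects the boundary) that the surgered-then-amalgamated spine has a neighborhood isotopic to the original, and in particular that no unexpected sphere or $\partial$-parallel component is created that would trigger a collapsing step in Definition \ref{d:PreWeakReduction}. That last point is exactly the content of the ``not a destabilization'' hypothesis, so the argument should be arranged to make the dichotomy ``sphere appears $\Leftrightarrow$ genus drops'' completely explicit.
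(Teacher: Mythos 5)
The paper does not prove this lemma itself; it cites it as Corollary~7.5 of \cite{gordon}, and Section~5 of the present paper reproduces exactly the machinery (spine construction, formula of Corollary~\ref{c:GHSgenus}) that the cited proof is built on. Your plan --- localize the change to a neighborhood of $M(H_+)$, show that amalgamating the weak-reduced GHS of $M(H_+)$ re-creates a spine whose neighborhood is isotopic to $H_+$, then splice that local isotopy into an identity map elsewhere --- is the right structure and is essentially the standard route to this result. You also correctly identify that the role of the ``not a destabilization'' hypothesis is to rule out a sphere in $G_+/DE$, which is precisely Definition~\ref{d:destabilization}, and that a sphere would trigger step~(1) of Definition~\ref{d:PreWeakReduction} and genuinely drop genus.

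Two points deserve more care before the plan is a proof. First, the line ``ruling this out guarantees that the surgered compression bodies have the same combined Euler characteristic, so the Heegaard genus\ldots matches and the neighborhoods really are isotopic rather than merely cobordant'' inverts the logic: equality of genus is a \emph{consequence}, not the mechanism. The isotopy must come from the explicit handle-structure argument you defer to --- namely, that the arcs chosen in $\CV(G_+/D)$, the vertical arc across $G_+/DE$, and the arcs in $\CV(G_+/E)$ can be slid so that a regular neighborhood of their union with $\partial_-\CV$ is isotopic to a neighborhood of a spine of $\CV(G_+)$, with the arc dual to $E$ surviving as the extra tunnel. That argument is the entire content of the lemma and should not rest on a genus count. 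Second, steps~(2) and~(3) of Definition~\ref{d:PreWeakReduction} are not strictly local to $M(H_+)$: if, say, $\CV(G_+/D)$ is a product, then $G_+/D$ and the thin level $\partial_-\CV$ are both removed, merging $M(H_+)$ with the adjacent $M(H_+')$; similarly a $\partial$-parallel component triggers step~(2). You note that you only need ``$M(H_+)$ and its immediate neighbors,'' which is the right scope, but a complete proof must check in each collapsing case that the merged spine still has a neighborhood isotopic to the original --- these cases are where the one vertical arc added across the vanished thin level absorbs into the existing spine, and they are exactly where the case analysis lives in the cited reference. Neither issue is a wrong turn; they are the places where the deferred verification has to actually be carried out.
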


It follows that if a GHS $G$ is obtained from a GHS $H$ by a weak reduction or a destabilization then the genus of $\amlg{G}$ is at most the genus of $\amlg{H}$.

\begin{dfn}
The {\it genus} of a GHS is the genus of its amalgamation. 
\end{dfn}

\begin{dfn}
Suppose $H$ is a GHS of $M$. Let $N$ denote a submanifold of $M$ bounded by elements of $\thin{H}$. Then we may define a GHS $H(N)$ of $N$. The thick and thin levels of $H(N)$ are the thick and thin levels of $H$ that lie in $N$. 
\end{dfn}

%\begin{lem}
%Suppose $H$ is a GHS of $M=X \cup_F Y$, where $F \in \thin{H}$. Then the genus of $H$ is equal to the sum of the genera of $H(X)$ and $H(Y)$, minus the genus of $F$. 
%\end{lem}

\begin{lem}
\label{l:GenusSum}
Suppose $H$ is a GHS of $M$, $F$ is an arbitrary subset of $\thin{H}$ in the interior of $M$, and $\{M_i\}_{i=1}^n$ are the closures of the components of $M \setminus F$. Then
\[\gen(H)=\sum \limits _{i=1} ^n \gen (H(M_i)) -\gen(F) +|F|-n+1.\]
\end{lem}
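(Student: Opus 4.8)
The plan is to prove this by induction on $|F|$, reducing to the case $|F|=1$, which is the genuinely geometric step. First I would reduce to the case where $F$ has a single component. Suppose $F = F_0 \cup F'$ where $F_0$ is a single thin level. Cutting along $F'$ first and then along $F_0$ inside the appropriate piece, and assuming the single-surface case, one gets the general formula by a bookkeeping argument: each time we cut along one more surface we increase the number of pieces by exactly one, subtract the genus of that surface once, and add one to the ``$+|F|-n+1$'' term; since $|F|$ and $n$ both go up by one, the quantity $|F| - n + 1$ is unchanged, and the only new contributions are $-\gen(F_0)$ together with the replacement of $\gen(H(M'))$ by $\gen(H(M_1')) + \gen(H(M_2'))$ for the two pieces $M_1', M_2'$ on either side of $F_0$ in $M'$. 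So the whole statement follows from the $|F|=1$ case applied repeatedly, provided one is careful that the $H(M_i)$ really are GHSs of the $M_i$ (which they are, since the $M_i$ are bounded by thin levels) and that each relevant surface $F_0$ is still a single thin level of the relevant sub-GHS.

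For the base case $|F|=1$, write $F_0$ for the single thin level and $M_1, M_2$ for the two sides (or $M_1$ alone if $F_0$ is non-separating, handled similarly with a sign adjustment absorbed into $n$). The genus of a GHS is the genus of its amalgamation, so I want to understand how the spine $\Sigma$ of $\amlg{H}$ relates to the spines of $\amlg{H(M_1)}$ and $\amlg{H(M_2)}$. The amalgamation construction builds $\Sigma$ by working up through the $M_i$'s, choosing spines of the lower compression bodies and vertical arcs connecting each thin level up to the spine constructed so far. When we cut along $F_0$, the amalgamations of $H(M_1)$ and $H(M_2)$ are built from exactly the same local data, except that $F_0$, which in $\amlg{H}$ was an interior thin level connected to the rest of the spine by a single vertical arc, now becomes a boundary component of each of $M_1$ and $M_2$ and is incorporated into the spine of one of them directly (as a component of $\bdy_- \CV \subset \bdy M_j$). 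The effect on Euler characteristic / genus can be computed from the handle structure: reconnecting the two amalgamated splittings of $M_1$ and $M_2$ along $F_0$ to recover $\amlg{H}$ is exactly an amalgamation move, and the standard genus formula for amalgamation along a surface $F_0$ of genus $g$ is $\gen = \gen(H_1) + \gen(H_2) - g$. Comparing with the claimed formula at $|F|=1$, $n=2$: $\gen(H) = \gen(H(M_1)) + \gen(H(M_2)) - \gen(F_0) + 1 - 2 + 1 = \gen(H(M_1)) + \gen(H(M_2)) - \gen(F_0)$, which matches. So the base case is precisely the known additivity of genus under amalgamation along a single incompressible surface, which follows from Lemma \ref{l:AmalgGenus} and the explicit description of $\Sigma$ (or can be cited from \cite{gordon}).

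The main obstacle I expect is the careful handling of the amalgamation construction's bookkeeping: making sure that the vertical arcs and the choice of which side's spine absorbs $F_0$ are tracked correctly, and that nothing degenerate happens (e.g. a piece $M_i$ being a product region, or $F_0$ meeting $\bdy M$, or non-separating thin levels). A clean way to sidestep the delicate Euler-characteristic computation is to argue directly in terms of spines: given spines realizing $\amlg{H(M_1)}$ and $\amlg{H(M_2)}$, explicitly build a spine for $\amlg{H}$ by gluing along a collar of $F_0$ and adding one vertical arc, count the change in $b_1$ of the spine (which equals the genus for a connected spine of a connected manifold, adjusted by boundary), and verify the formula term by term. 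The non-separating case and the case where $F$ has components meeting $\bdy M$ should be folded in by adjusting $n$ appropriately, exactly as the statement's restriction ``$F$ in the interior of $M$'' anticipates. I would present the inductive step first in full and then dispatch the base case by invoking the amalgamation genus formula, keeping the geometric heavy lifting confined to that one citation.
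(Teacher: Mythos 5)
Your proposal is correct and follows essentially the same route as the paper: induction on the number of components of $F$, with the geometric content in the $|F|=1$ case analyzed via the spine of the amalgamation (remove a copy of $F$, reattach with a vertical arc). The one place you are slightly loose is the non-separating subcase, where the paper makes explicit that the reconnecting vertical arc joins the spine to itself and hence raises genus by one, which is exactly what produces the $|F|-n+1$ correction; also note that Lemma~\ref{l:AmalgGenus} is about invariance under non-destabilizing weak reduction rather than a ready-made amalgamation genus formula, so the direct spine computation you sketch is the right thing to lean on.
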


\begin{proof}
The proof is by induction on $|F|$. Suppose first $F$ is connected, so that $|F|=1$. There are then two cases, depending on whether or not $F$ separates $M$. 

We first deal with the case where $F$ separates  $M$ into $M_1$ and $M_2$. In this case $|F|-n+1=1-2+1=0$, so we need to establish \[\gen(H)=\gen(H(M_1))+\gen(H(M_2))-\gen(F).\]

Let $\Sigma (M_1)$ and $\Sigma (M_2)$ denote spines of $\amlg{H(M_1)}$ and $\amlg{H(M_2)}$. Then $\Sigma (M_1)$ is the union of a properly embedded graph $\Sigma (M_1)' \subset M_1$ and $\bdy_- M_1$. If $M_1$ is above $M_2$ in $M$ then $F$ is a component of $\bdy _- M_1$. Let $(\bdy _-M_1)'=\bdy _-M_1 \setminus F$. 

To form the spine of $\amlg{H}$ we attach a vertical arc from $\Sigma (M_2)$ to  $\Sigma (M_1)' \cup (\bdy _- M_1)'$, through the compression body in $M_2$ that is incident to $F$. Hence, the graph part of the spine of $H$ comes from the graph parts of $\Sigma (M_1)$ and $\Sigma(M_2)$, together with an arc. The surface part only comes from the surface part of $\Sigma(M_2)$ and the surface parts of $\Sigma(M_1)$ other than $F$. See Figure \ref{f:AmlgSpine}. Hence, the spine of $\amlg{H}$ is obtained from $\Sigma(M_1) \cup \Sigma(M_2)$ by connecting with a vertical arc and removing a copy of $F$. The result thus follows. 

        \begin{figure}[htbp]
        \psfrag{X}{$M_1$}
        \psfrag{Y}{$M_2$}
        \psfrag{F}{$F$}
        \vspace{0 in}
        \begin{center}
       \includegraphics[width=3 in]{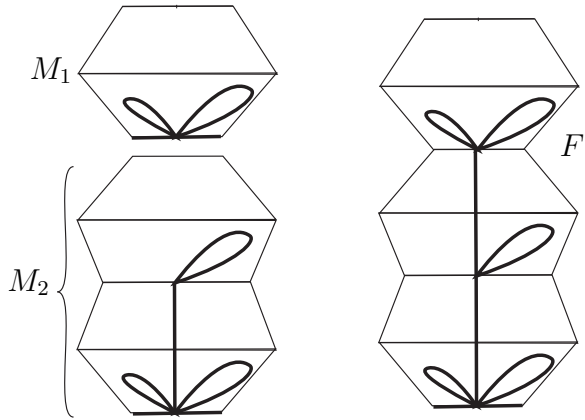}
       \caption{The spine of $\amlg{H}$ is obtained from $\Sigma(M_1) \cup \Sigma(M_2)$ by connecting with a vertical arc and removing a copy of $F$.}
        \label{f:AmlgSpine}
        \end{center}
        \end{figure}

We now move on to the case where $F$ is a connected, non-separating surface. Now $|F|-n+1=1-1+1=1$, so we need to establish \[\gen(H)=\gen(H(M_1))+\gen(H(M_2))-\gen(F)+1.\]
Let $N$ denote the manifold obtained from $M$ by cutting open along $F$. Let $\Sigma(N)$ denote the spine of $\amlg{H(N)}$. As in the separating case, the spine of $\amlg{H}$ is obtained from $\Sigma(N)$ by first removing a copy of $F$. This drops the genus by the genus of $F$. To complete the formation of the spine of $\amlg{H}$, we attach a vertical arc through the compression body incident to the other copy of $F$. As this arc connects what remains of $\Sigma(N)$ to itself, this increases the genus by one. 

To proceed from the case where $|F|=1$ to arbitrary values of $|F|$, simply note that $M$ can be successively built up from $\{M_i\}$ by attaching along one component of $F$ at a time. The result thus follows by an elementary induction argument.
\end{proof}

\begin{cor}
\label{c:GHSgenus}
Let $H$ be a GHS of $M$. Let $\thin{H}^\circ$ denote the subset of $\thin{H}$ consisting of those elements that lie in the interior of $M$. Then
\begin{eqnarray*}
\gen(H)&=&\sum \limits _{H_+ \in \thick{H}} \gen(H_+) -\sum \limits _{H_- \in \thin{H}^\circ} \gen(H_-) \\
&&+ |\thin{H}^\circ| - |\thick{H}|+1
\end{eqnarray*}
\end{cor}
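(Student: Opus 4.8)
The plan is to derive Corollary~\ref{c:GHSgenus} from Lemma~\ref{l:GenusSum} by taking $F$ to be \emph{all} of the interior thin levels at once, i.e.\ $F = \thin{H}^\circ$. With this choice, the components $\{M_i\}_{i=1}^n$ of $M \setminus F$ are precisely the submanifolds $M(H_+)$ for $H_+ \in \thick{H}$: by part (1) of Definition~\ref{d:GHS}, each component of $M \setminus \thin{H}$ contains a unique thick level, and cutting along only the \emph{interior} thin levels does not further subdivide these pieces. Hence $n = |\thick{H}|$, and $|F| = |\thin{H}^\circ|$ and $\gen(F) = \sum_{H_- \in \thin{H}^\circ}\gen(H_-)$ by definition of the genus of a (possibly disconnected) surface.

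The main computation is then just bookkeeping. For each $i$, the piece $M_i$ equals some $M(H_+)$, and the GHS $H(M_i)$ has a single thick level $H_+$ and thin levels consisting only of components of $\bdy M(H_+)$ — these are either interior thin levels of $H$ (now boundary components of $M_i$) or pieces of $\bdy M$. In either case none of them lies in the interior of $M_i$, so by the formula of Lemma~\ref{l:GenusSum} applied to $H(M_i)$ with empty $F$ (or simply by the definition of genus as the genus of the amalgamation, which for a one-thick-level GHS is just $H_+$), we get $\gen(H(M_i)) = \gen(H_+)$. Substituting $\sum_{i=1}^n \gen(H(M_i)) = \sum_{H_+ \in \thick{H}}\gen(H_+)$, $\gen(F) = \sum_{H_- \in \thin{H}^\circ}\gen(H_-)$, $|F| = |\thin{H}^\circ|$, and $n = |\thick{H}|$ into the identity of Lemma~\ref{l:GenusSum} yields exactly the claimed equation.

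The one point requiring care — and the step I expect to be the main obstacle — is verifying rigorously that cutting $M$ along $F = \thin{H}^\circ$ reproduces exactly the collection $\{M(H_+) : H_+ \in \thick{H}\}$, with the correct multiplicity and with $\gen(H(M(H_+))) = \gen(H_+)$. This is essentially a matter of unwinding Definition~\ref{d:GHS}: one must observe that $M \setminus \thin{H}$ and $M \setminus \thin{H}^\circ$ have the same components, since the thin levels lying on $\bdy M$ do not separate anything off, and that the induced GHS on each such component has $H_+$ as its unique thick level with no interior thin levels, so its amalgamation is $H_+$ itself. I would also note the edge case where $\thin{H}^\circ = \emptyset$: then Lemma~\ref{l:GenusSum} does not literally apply, but in that situation $|\thick{H}| = 1$, $M(H_+) = M$, and both sides of the corollary reduce to $\gen(H_+)$, so the formula holds trivially. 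Once these identifications are in place, the corollary is immediate.
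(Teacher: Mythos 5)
Your proof is correct and takes essentially the same approach as the paper: set $F = \thin{H}^\circ$ and apply Lemma~\ref{l:GenusSum}, identifying the components of $M\setminus F$ with the submanifolds $M(H_+)$ and noting $n = |\thick{H}|$. You simply spell out more of the bookkeeping (including the harmless edge case $\thin{H}^\circ=\emptyset$) than the paper's two-sentence proof does.
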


\begin{proof}
Let $F$ be the union of all of the surfaces in $\thin{H}^\circ$, and apply Lemma \ref{l:GenusSum}. Note that there is one element of $\thick{H}$ in each component of the complement of $\thin{H}^\circ$. So the number of such components is precisely $|\thick{H}|$. 
\end{proof}

It should be noted that an alternative approach to the material in this section would be to first define the genus of a GHS to be that given by the formula in Corollary \ref{c:GHSgenus}. Lemma \ref{l:GenusSum} then follows from this definition fairly quickly. However, to prove equivalence to the definition given here, one would need an additional lemma that asserts that genus does not change under weak reductions that are not destabilizations.

\section{Sequences of GHSs}
\label{s:LastDefSection}

\begin{dfn}
A {\it Sequence Of GHSs} (SOG), $\{H^i\}$ of $M$ is a finite sequence such that for each $i$ either $H^i$ or $H^{i+1}$ is obtained from the other by a weak reduction.
\end{dfn}

\begin{dfn}
If $\bf H$ is a SOG and $k$ is such that $H^{k-1}$ and $H^{k+1}$ are obtained from $H^k$ by a weak reduction then we say the GHS $H^k$ is {\it maximal} in $\bf H$. 
\end{dfn}

It follows that maximal GHSs are larger than their immediate predecessor and immediate successor. 

Just as there are ways to make a GHS ``smaller", there are also ways to make a SOG ``smaller". These are called {\it SOG reductions}, and are explicitly defined in Section 8 of \cite{gordon}. If the first and last GHS of a SOG are strongly irreducible and there are no SOG reductions then the SOG is said to be {\it irreducible}. For our purposes, all we need to know about SOG reduction is that the maximal GHSs of the new SOG are obtained from the maximal GHSs of the old one by weak reduction, and the following lemma holds:

\begin{lem}
\label{l:maximalGHS}
{\rm (\cite{gordon}, Lemma 8.9)} Every maximal GHS of an irreducible SOG is critical. 
\end{lem}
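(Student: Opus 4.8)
The plan is to extract the essential information from an irreducible SOG $\mathbf{H}$ and a maximal GHS $H^k$ in it, and argue that $H^k$ cannot fail to be critical because any such failure would produce a SOG reduction, contradicting irreducibility. First I would recall the setup: since $H^k$ is maximal, both $H^{k-1}$ and $H^{k+1}$ are obtained from $H^k$ by weak reductions, say along weak reducing pairs $(D^-,E^-)$ and $(D^+,E^+)$ respectively. By Theorem \ref{t:IncompressibleThinLevels} the thin levels are incompressible, so all the compressing action happens at the thick levels; in fact one checks (as in \cite{gordon}) that a single thick level $H_+ \in \thick{H^k}$ carries the compressions for both of these weak reductions — if two \emph{different} thick levels were involved, the two weak reductions would be ``independent'' and one could be pushed past the other, yielding a SOG reduction. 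So we may assume $(D^-,E^-)$ and $(D^+,E^+)$ are both weak reducing pairs for a single $H_+$, and every other thick level of $H^k$ is strongly irreducible in its piece $M(H_+')$ (again, if some other thick level admitted a weak reducing pair, the SOG would not have $H^k$ as a genuine maximum without an available reduction).

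Next I would analyze the four disks $D^-, E^-, D^+, E^+$ on $H_+$, with $D^-, D^+$ on one side ($\CV(H_+)$) and $E^-, E^+$ on the other ($\CW(H_+)$), after an isotopy making the pairs from opposite weak reductions intersect minimally. The key structural claim is that the collection of all compressions for $H_+$ in $M(H_+)$ partitions into two sets $C_0, C_1$ as in Definition \ref{d:critical}: put $(D^-,E^-)$-type disks (and everything disjoint-compatible with that weak reduction) in $C_0$ and $(D^+,E^+)$-type disks in $C_1$. Condition (1) of criticality is immediate since each $C_i$ contains a weak reducing pair by construction. For condition (2), I must show that a disk from $C_0$ and a disk from $C_1$ never form a weak reducing pair. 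This is exactly where irreducibility of the SOG is used: if $V \in C_0$ and $W \in C_1$ were disjoint, then $(V,W)$ would be a weak reducing pair ``between'' the two weak reductions defining $H^{k-1}$ and $H^{k+1}$, and weakly reducing $H^k$ along $(V,W)$ would give a GHS through which the SOG factors — precisely the kind of SOG reduction ruled out by hypothesis. So every cross-pair must intersect, giving condition (2).

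The main obstacle I expect is the bookkeeping in the previous paragraph: making precise which compressions get assigned to $C_0$ versus $C_1$ so that the partition is exhaustive (every compression for $H_+$ lands in one of the two sets) and well-defined, and then showing rigorously that a disjoint cross-pair really does yield a legitimate SOG reduction rather than merely a smaller GHS. This requires invoking the explicit list of SOG reductions from Section 8 of \cite{gordon} and checking that the configuration produced is one of them — in particular handling the degenerate cases where some of $D^\pm, E^\pm$ coincide or are parallel, and the cases where weak reduction at $(V,W)$ triggers one of the cleanup moves (sphere removal, $\bdy$-parallel removal, product removal) in Definition \ref{d:PreWeakReduction}. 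Since the statement is quoted from \cite{gordon} (Lemma 8.9), I would lean on the combinatorial machinery developed there for these edge cases, and present the argument above as the conceptual skeleton: maximality localizes all weak reducibility to a single thick level, and SOG-irreducibility forbids the cross-pairs, which together are exactly the definition of a critical GHS.
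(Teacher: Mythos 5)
The paper states this lemma as a citation to \cite{gordon} (Lemma 8.9) and gives no proof of its own, so there is no in-paper argument to compare against. Judging your sketch on its own terms: the outline --- localize both incident weak reductions to a single thick level $H_+$, then use SOG-irreducibility to separate the two weak reducing pairs and extract the partition $C_0$, $C_1$ --- is the right conceptual skeleton for what a proof of Lemma 8.9 must do, and your flagging of the bookkeeping as the main obstacle is honest and accurate.

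One organizational point is worth fixing, because as written the logic is misallocated. If you define $C_0$ to be the connected component, in the graph whose vertices are compressions for $H_+$ and whose edges are weak reducing pairs, containing $D^-$ and $E^-$, and $C_1$ to be its complement, then condition (2) of Definition \ref{d:critical} holds for free: no edge of a graph joins a component to its complement. What is \emph{not} automatic, and is exactly where SOG-irreducibility must enter, is condition (1) for $C_1$: you must show $(D^+,E^+)$ does not lie in the same component as $(D^-,E^-)$, since otherwise $C_1$ may contain no weak reducing pair at all. Your cross-pair argument --- a disjoint pair straddling the two sets would give a path between $(D^-,E^-)$ and $(D^+,E^+)$ and hence a way to shortcut $H^k$, contradicting irreducibility --- is the right idea, but it is really proving that the two components are distinct, which is condition (1), not condition (2). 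Separately, your parenthetical claim that every other thick level must already be strongly irreducible needs a specific SOG-reduction move to justify it (a weak reducing pair on a different thick level does not by itself change $H^{k-1}$ or $H^{k+1}$, so irreducibility does not immediately bite); you correctly defer that to the machinery of Section 8 of \cite{gordon}, but it should be named as an additional case rather than dismissed as obvious.
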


\begin{dfn}
The {\it genus} of a SOG is the maximum among the genera of its GHSs.
\end{dfn}

\begin{lem}
\label{l:GenusGoesDown}
If a SOG $\Lambda$ is obtained from an SOG $\Gamma$ by a reduction then the genus of $\Gamma$ is at least the genus of $\Lambda$. 
\end{lem}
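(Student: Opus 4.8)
The plan is to reduce the statement about SOGs to the already-established fact (Lemma \ref{l:AmalgGenus} and the paragraph following it) that a single weak reduction, destabilization included, never increases the genus of the amalgamation, hence never increases the genus of a GHS. First I would recall the definition of a SOG reduction from Section 8 of \cite{gordon}; the only feature I actually need, as the excerpt itself flags, is that the maximal GHSs of $\Lambda$ are obtained from the maximal GHSs of $\Gamma$ by weak reductions, and that the non-maximal GHSs of $\Lambda$ are, up to the bookkeeping of the reduction, drawn from (weak reductions of) the GHSs already appearing in $\Gamma$. So the essential point is monotonicity of genus along any chain of weak reductions.

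The key steps, in order, are: (1) observe that if $H'$ is obtained from a GHS $H$ by a weak reduction, then $\gen(H') \le \gen(H)$ — this is immediate from Lemma \ref{l:AmalgGenus} when the weak reduction is not a destabilization (the amalgamations are isotopic, so the genera are equal), and when it is a destabilization the amalgamation genus strictly drops, as noted in the remark after Definition \ref{d:destabilization} and the paragraph after Lemma \ref{l:AmalgGenus}. (2) Conclude by induction that if $H'$ is obtained from $H$ by a finite chain of weak reductions then $\gen(H') \le \gen(H)$. (3) Now take any GHS $K$ appearing in $\Lambda$. By the structure of SOG reduction, $K$ is obtained by a (possibly empty) chain of weak reductions from some GHS $\tilde K$ appearing in $\Gamma$; by step (2), $\gen(K) \le \gen(\tilde K) \le \gen(\Gamma)$, where the last inequality is just the definition of the genus of a SOG as the maximum over its GHSs. (4) Since this holds for every GHS $K$ of $\Lambda$, taking the maximum over all such $K$ gives $\gen(\Lambda) \le \gen(\Gamma)$, which is exactly the claim.

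I would write the inductive step of (2) carefully: the base case is $\gen(H) \le \gen(H)$, and the inductive step applies (1) to the last weak reduction in the chain together with the inductive hypothesis on the preceding chain. The only genuinely delicate point — and the one I expect to be the main obstacle — is (3): verifying from the precise definition of SOG reduction in \cite{gordon} that every GHS of $\Lambda$ really is a weak-reduction-descendant of some GHS of $\Gamma$. The maximal GHSs are handled by the quoted property, but one must also check that the GHSs of $\Lambda$ that are \emph{not} maximal are either literally GHSs of $\Gamma$ or are obtained from GHSs of $\Gamma$ by weak reductions performed during the reduction move; this is a case analysis over the types of SOG reductions listed in Section 8 of \cite{gordon}, and it is where all the real content lives. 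Everything else is the routine monotonicity argument above.
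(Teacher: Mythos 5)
Your steps (1) and (2) — monotonicity of genus under a single weak reduction, and hence under a chain of weak reductions — are exactly right and match the paper's reasoning. The problem is step (3), and you have correctly identified it yourself as the gap: you need that \emph{every} GHS of $\Lambda$ is a weak-reduction descendant of some GHS of $\Gamma$, but the only structural fact the paper provides about SOG reduction is the much weaker statement that the \emph{maximal} GHSs of $\Lambda$ are weak-reduction descendants of GHSs of $\Gamma$. Your proposal therefore rests on an unverified assertion, and the case analysis you defer to is precisely the work you haven't done.

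The paper avoids this entirely by one small preliminary observation that your proposal misses: within any single SOG, every non-maximal GHS is obtained from some maximal GHS of that same SOG by a chain of weak reductions (this follows directly from the definition of a SOG as a sequence in which consecutive entries differ by a weak reduction in one direction or the other). Combined with your step (2), this shows that the genus of a SOG is already equal to the maximum of the genera over its \emph{maximal} GHSs alone. Once you make that observation, you only need the quoted property about maximal GHSs, and the chain
\[
\gen(\Lambda) \;=\; \max_{\text{maximal }K\in\Lambda}\gen(K) \;\le\; \max_{\tilde K\in\Gamma}\gen(\tilde K) \;=\; \gen(\Gamma)
\]
closes the argument with no case analysis. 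So the fix is not to verify your step (3) from the definitions in Section~8 of \cite{gordon}, but to replace it with this cheaper reduction to maximal GHSs, which renders the question about non-maximal GHSs of $\Lambda$ moot.
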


\begin{proof}
Since weak reduction can only decrease the genus of a GHS, the genus of a SOG is the maximum among the genera of its maximal GHSs. But if one SOG is obtained from another by a reduction, then its maximal GHSs are obtained from GHSs of the original by weak reductions. The result thus follows from Lemma \ref{l:AmalgGenus}.
\end{proof}

\section{Barrier surfaces}
\label{s:BarrierSurfaces}

We begin this section with a brief description of the complexity of a gluing map, as defined in \cite{barrier}. Let $M$ be a compact, irreducible, (possibly disconnected) 3-manifold with incompressible boundary, such that no component of $M$ is an $I$-bundle. Suppose boundary components $F_1$ and $F_2$ of $M$ are homeomorphic.  Let $M_\phi$ be the manifold obtained from $M$ by gluing these boundary components together by the map $\phi:F_1 \to F_2$. 

Let $Q$ denote a properly embedded (possibly disconnected) surface in $M$ of maximal Euler characteristic, which is both incompressible and $\bdy$-incompressible, and is incident to both $F_1$ and $F_2$.  Then we define the {\it distance} of $\phi$ to be the distance between the loops of $\phi(F_1 \cap Q)$ and $F_2 \cap Q$. When the genus of $F_2$ is at least two, then this distance is measured in the curve complex of $F_2$. If $F_2 \cong T^2$, then this distance is measured in the Farey graph. 

We are now prepared to state the main result of \cite{barrier}. 

\begin{thm}
\label{t:Barrier}
\cite{barrier}
Let $F$ denote the image of $F_1$ in $M_\phi$. There is a constant $K$, depending linearly on $\chi(Q)$, such that if the distance of $\phi \ge Kg$, then any incompressible, strongly irreducible, or critical surface $H$ in $M_\phi$ of genus at most $g$ can be isotoped to be disjoint from $F$. 
\end{thm}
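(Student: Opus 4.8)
The plan is to prove Theorem~\ref{t:Barrier} by a standard normal-surface/innermost-arc argument relative to the barrier surface $Q$. First I would take the surface $H$ of genus at most $g$ and isotope it to intersect $Q$ minimally, so that no component of $H \cap Q$ bounds a disk in $Q$ or in $H$ that can be used to remove an intersection. If $H \cap Q = \emptyset$ then $H$ lives in $M_\phi$ cut along $Q$; since $Q$ is incident to both $F_1$ and $F_2$ and cutting along it undoes the gluing (or at least separates $F$ from the rest), we can then push $H$ off $F$ directly. So the real work is to rule out $H \cap Q \neq \emptyset$ when the distance of $\phi$ is large compared to $g$.

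The key step is a distance estimate. Assume $H \cap Q \neq \emptyset$ after minimal positioning. I would cut $H$ along $Q$ and consider the pieces of $H$ lying in $M$ (cut along $F_1, F_2$); each such piece is a subsurface of $H$, hence has genus at most $g$, so its boundary curves — which map to curves in $F_1$ and $F_2$ — give a bounded-length path in the relevant curve complex (of $F_2$, or the Farey graph when $F_2 \cong T^2$) between $\phi(F_1 \cap Q)$ and $F_2 \cap Q$. Here is where the topological hypotheses on $H$ enter crucially: incompressibility, strong irreducibility, and criticality each guarantee that the disks/compressions that would let one surger $H$ (and thereby control how its boundary curves sit in $F_1$ and $F_2$) are constrained enough that a subsurface of Euler characteristic bounded in terms of $g$ yields a path of length bounded by $Kg$, for $K$ linear in $\chi(Q)$. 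The incompressible case is the cleanest (boundary curves of an incompressible subsurface are essential and the genus bound directly bounds the number of curves); the strongly irreducible and critical cases require the extra observation, already in \cite{barrier}, that a strongly irreducible (resp.\ critical) surface meeting a barrier surface can be surgered into incompressible pieces away from $Q$ without increasing complexity, reducing to the incompressible case with a controlled loss in $\chi$. This gives $\mathrm{dist}(\phi) \le Kg$, contradicting $\mathrm{dist}(\phi) \ge Kg$ (with strict inequality, or by taking $K$ slightly larger).

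The main obstacle I expect is precisely the handling of the strongly irreducible and critical cases: one must show that although such an $H$ need not be incompressible, any compressing disk for a piece of $H \cap (M \text{ cut along } Q)$ can be taken disjoint from $Q$ (or that compressions on the two sides interact in a way controlled by strong irreducibility/criticality), so that surgering produces incompressible-and-$\bdy$-incompressible pieces whose boundary still tracks $\phi(F_1 \cap Q)$ versus $F_2 \cap Q$ up to bounded distance. The definition of critical surface (Definition~\ref{d:critical}), with its partition $C_0, C_1$ of compressions, is what makes the critical case work: at most a bounded amount of "badness" can occur before one is forced into a weak reducing pair across $Q$, which is then used to do the surgery. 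I would also need the hypothesis that no component of $M$ is an $I$-bundle to ensure $Q$ exists and has the maximal-Euler-characteristic property that makes $K$ depend only linearly on $\chi(Q)$. The sphere and boundary-parallel exceptional cases are routine and dispatched by the irreducibility and incompressible-boundary assumptions on $M$.
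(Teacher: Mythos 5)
Your proposal cannot be compared against a proof appearing in this paper, because Theorem \ref{t:Barrier} is not proved here: it is imported verbatim from \cite{barrier}, and the only commentary the paper gives is that \cite{barrier} actually proves a \emph{stronger} result, valid for all \emph{topologically minimal} surfaces, of which incompressible, strongly irreducible, and critical surfaces are (by \cite{TopIndexI}) exactly the cases of topological index $0$, $1$, and $2$. That remark already signals a structural mismatch with your sketch: the cited argument is uniform across all three surface types via the topological-index machinery, whereas your plan runs a clean incompressible case and then tries to bootstrap the strongly irreducible and critical cases by surgering ``without increasing complexity,'' which is precisely the hard content that the topological-index framework is there to replace.

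There is also a concrete gap in your opening reduction. You isotope $H$ to minimize $|H\cap Q|$ and assert that if $H\cap Q=\emptyset$ then $H$ can be pushed off $F$, on the grounds that cutting along $Q$ ``undoes the gluing (or at least separates $F$ from the rest).'' That is not so: $Q$ is a surface with boundary on $F_1\cup F_2$, so in $M_\phi$ its boundary sits as a collection of curves on the interior closed surface $F$; cutting $M_\phi$ along $Q$ leaves $F$ essentially intact and does not separate it from anything, and $H\cap Q=\emptyset$ gives no control whatsoever on $H\cap F$. The roles of $F$ and $Q$ are reversed from what you want: one isotopes $H$ with respect to the closed incompressible surface $F$; $Q$ is only a reference object of controlled Euler characteristic, incident to $F$ on both sides, against which the boundary curves of the two bounded-complexity pieces $H\cap \CV$ and $H\cap \CW$ are compared in $\mathcal C(F)$ (or the Farey graph). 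Your ``distance estimate'' paragraph is aimed at the right mechanism --- genus bound on $H$ gives bounded curve-complex diameter, then the triangle inequality through $H\cap F$ contradicts ${\rm dist}(\phi)\ge Kg$ --- but the intersection you need to analyze and ultimately remove is $H\cap F$, not $H\cap Q$, and until that is fixed the argument does not even address the conclusion of the theorem.
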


The theorem given in \cite{barrier} is quite a bit stronger than this. There we prove that the conclusion holds for all {\it topologically minimal} surfaces. As incompressible, strongly irreducible, and critical surfaces are examples of topologically minimal surface, the version of the theorem stated above follows. 

Theorem \ref{t:Barrier} motivates us to make the following definition:

\begin{dfn}
An incompressible surface $F$ in a 3-manifold $M$ is a {\it $g$-barrier surface} if any incompressible, strongly irreducible, or critical surface in $M$ whose genus is at most $g$ can be isotoped to be disjoint from $F$. 
\end{dfn}

By employing Theorem \ref{t:Barrier} we may construct 3-manifolds with any number of $g$-barrier surfaces. Simply begin with a collection of 3-manifolds and successively glue boundary components together by ``sufficiently complicated" maps.

\begin{lem}
\label{l:FparallelToThinLevelGHS}
Let $M$ be a 3-manifold which has a $g$-barrier surface $F$. Let $H$ be a genus $g$ strongly irreducible or critical GHS of $M$. Then $F$ is isotopic to a thin level of $H$.
\end{lem}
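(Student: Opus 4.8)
The plan is to apply Theorem~\ref{t:Barrier} (via the $g$-barrier hypothesis) to each thick level of $H$, and then use the structure of a GHS together with incompressibility of thin levels (Theorem~\ref{t:IncompressibleThinLevels}) to locate $F$ among the thin levels. First I would observe that since $H$ is strongly irreducible or critical, each thick level $H_+ \in \thick{H}$ is strongly irreducible or critical in $M(H_+)$, and each has genus at most $g = \gen(H)$ (because $\gen(H)$ bounds the genus of the amalgamation, hence, via Corollary~\ref{c:GHSgenus}, the genus of each thick level). Likewise every thin level is incompressible of genus at most $g$. So each thick and thin level of $H$ is a surface of genus at most $g$ that is incompressible, strongly irreducible, or critical in $M$, and therefore by the $g$-barrier property can be isotoped off $F$.

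The subtlety is that these isotopies must be performed \emph{simultaneously and compatibly}, so that afterwards $F$ lies in a single complementary region of $\thin{H}$, i.e. inside some $M(H_+)$. I would handle this by isotoping the thin levels off $F$ first; since the thin levels are disjoint incompressible surfaces, a standard innermost-disk/outermost-arc argument lets us make the whole collection $\thin{H}$ disjoint from $F$ at once. Then $F$ lies in the closure of one component of $M \setminus \thin{H}$, namely some $M(H_+)$, and within that piece $F$ is an incompressible surface disjoint from $\thin{H} = \bdy_- M(H_+) \cup (\text{relevant boundary})$. Now apply the barrier property to the single remaining surface $H_+$: isotope $H_+$ (rel a neighborhood of $\thin{H}$, which we may do since all the action is interior to $M(H_+)$) to be disjoint from $F$. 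At this point $F$ is an incompressible surface in one of the two compression bodies $\CV(H_+)$ or $\CW(H_+)$ cut off by $H_+$.

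To finish, I would invoke the classification of incompressible surfaces in a compression body: such a surface is either $\bdy$-parallel or isotopic to a component of $\bdy_-$. If $F$ were $\bdy$-parallel in the compression body, it would be parallel either to a thin level (done) or to a component of $\bdy M$, but an incompressible surface parallel into $\bdy M$ is usually excluded by the setup (and in any case then $M$ has a product region that can be absorbed); the remaining possibility is that $F$ is isotopic to a component of $\bdy_- \CV(H_+)$ or $\bdy_- \CW(H_+)$, each of which is by Definition~\ref{d:GHS} an element of $\thin{H}$. In every case $F$ is isotopic to a thin level of $H$, as desired.

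The main obstacle I expect is the bookkeeping in the second paragraph: ensuring the successive isotopies (first of $\thin{H}$, then of $H_+$) can be carried out without reintroducing intersections with $F$, i.e. that we can always isotope one piece of the GHS off $F$ while keeping fixed a neighborhood of the already-disjoint pieces. This should follow from the fact that the barrier isotopies can be taken supported away from any given incompressible surface disjoint from $F$, but making that precise — essentially a relative version of Theorem~\ref{t:Barrier} — is the delicate point, and may require re-examining the proof in \cite{barrier} rather than using it as a black box.
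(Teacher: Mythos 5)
Your proposal follows the same overall strategy as the paper's proof: bound the genus of each thick and thin level by $g$ using Corollary~\ref{c:GHSgenus}, note incompressibility of thin levels via Theorem~\ref{t:IncompressibleThinLevels}, use the $g$-barrier property to get $F$ off the thin levels and then off the single remaining thick level, and finally invoke the classification of closed incompressible surfaces in a compression body. The paper's proof is almost word-for-word this, and is considerably terser about the simultaneity issues you raise.

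One substantive misstep: you claim that ``since the thin levels are disjoint incompressible surfaces, a standard innermost-disk/outermost-arc argument lets us make the whole collection $\thin{H}$ disjoint from $F$ at once.'' This is not what an innermost-disk argument accomplishes. Innermost-disk arguments remove only \emph{inessential} curves of intersection; two incompressible surfaces in an irreducible manifold may intersect in curves that are essential on both, and no amount of surgery will separate them. Removing those essential intersections is precisely the content of the $g$-barrier hypothesis, so the innermost-disk argument cannot substitute for it. The correct (and implicit) move is to observe that the disjoint union of all thin levels is itself an incompressible surface whose components each have genus at most $g$, and apply the barrier property to that union all at once. Similarly, your closing discussion of the compression body introduces a false dichotomy: every closed incompressible surface in a compression body $\CC$ is parallel to a component of $\partial_-\CC$ (there is no separate ``$\partial$-parallel to $\partial M$'' case, since every component of $\partial_-\CC$ is a thin level by Definition~\ref{d:GHS}, including components lying in $\partial M$).

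That said, you are right to flag the second isotopy — pushing $H_+$ off $F$ after $F$ is already confined to $M(H_+)$ — as delicate: one needs the barrier isotopy to take place inside $M(H_+)$ rather than sweeping $F$ back across a thin level, which amounts to a relative version of Theorem~\ref{t:Barrier}. The paper does not address this explicitly either, so this is a reasonable caution; it is resolved in \cite{barrier} by the nature of the sweepout argument, not by anything internal to this paper.
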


\begin{proof}
Since the genus of $H$ is $g$, it follows from Corollary \ref{c:GHSgenus} that the genus of every thick and thin level of $H$ is at most $g$. By Theorem \ref{t:IncompressibleThinLevels} we know that each thin level of $H$ is incompressible. Since $F$ is a $g$-barrier surface, it can be isotoped to be disjoint from every thin level. But then $F$ is contained in $M(H_+)$, for some thick level, $H_+$. The surface $H_+$ is either strongly irreducible or critical, so again since $F$ is a $g$-barrier surface it may be isotoped to be disjoint from $H_+$. The surface $F$ can thus be isotoped into a compression body, $\CC$.  But every incompressible surface in $\CC$ is parallel to some component of $\bdy _-\CC$. Each such component is a thin level of $H$. 
\end{proof}

%As a corollary we obtain a new proof of a result due independently to Lackenby \cite{lackenby:04}, Souto \cite{souto}, and Li \cite{li:08}:

%\begin{cor}
%Let $M_1$ and $M_2$ denote compact, orientable, irreducible 3-manifolds with incompressible boundary, neither of which is an $I$-bundle. Suppose $F_1$ and $F_2$ are homeomorphic $\bdy$-components of $M_1$ and $M_2$, respectively. Let $\phi:F_1 \to F_2$ be a homeomorphism, and $M_\phi$ the result of gluing $M_1$ to $M_2$ by this map. Then there is a constant $K$ such that if the distance of $\phi$ is greater than $Kg$, then any  Heegaard splitting of $M_\phi$ whose genus is at most $g$ is an amalgamation of splittings of $M_1$ and $M_2$. 
%\end{cor}

%\begin{proof}
%Let $F$ denote the image of $F_1$ in $M_\phi$. Let $H_*$ be a Heegaard splitting of $M_\phi$ whose genus is at most $g$. Let $H$ be a GHS obtained from the GHS whose only thick level is $H_*$ by a maximal sequence of weak reductions. Then $H$ is a strongly irreducible GHS. By Theorem \ref{t:Barrier}, $F$ is a $g$-barrier surface. By Lemma \ref{l:FparallelToThinLevelGHS}, $F$ is therefore isotopic to a thin level $H_-$ of $H$. Cutting along $H_-$ then yields manifolds $M_1'$ and $M_2'$ that are homeomorphic to $M_1$ and $M_2$. The Heegaard splitting $H_*$ is thus an amalgamation of the splittings $\amlg{H(M_1')}$ and $\amlg{H(M_2')}$.
%\end{proof}

\begin{lem}
\label{l:FparallelToThinLevelSOG}
Let $M$ be a 3-manifold which has a $g$-barrier surface $F$. Let $\bf H$ be a genus $g$ irreducible SOG of $M$. Then $F$ is isotopic to a thin level of every element of $\bf H$. 
\end{lem}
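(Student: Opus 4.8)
The plan is to reduce the SOG statement to the GHS statement (Lemma~\ref{l:FparallelToThinLevelGHS}) by exploiting the maximal GHSs and the structure of SOG reductions. First I would recall that, since $\mathbf{H}$ is an irreducible SOG, its first and last GHSs are strongly irreducible, and by Lemma~\ref{l:maximalGHS} every maximal GHS of $\mathbf{H}$ is critical. Moreover, every GHS appearing in $\mathbf{H}$ is obtained from a maximal GHS (or from the first/last GHS) by a sequence of weak reductions, hence by Lemma~\ref{l:AmalgGenus} (together with the remark that destabilizations only decrease genus) has genus at most $\gen(\mathbf{H})=g$. So every GHS in $\mathbf{H}$ is a genus $g$ GHS that is either strongly irreducible or critical, except possibly for the intermediate non-maximal, non-extremal GHSs, which still have genus at most $g$.

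The key step is then to handle an \emph{arbitrary} GHS $H^i$ in the sequence, not just the maximal or extremal ones. For a maximal GHS or for the first or last GHS, it is strongly irreducible or critical of genus at most $g$, so Lemma~\ref{l:FparallelToThinLevelGHS} applies directly (the barrier surface hypothesis only needs genus $\le g$, which the proof of that lemma in fact uses), and $F$ is isotopic to a thin level of it. For a general $H^i$, I would argue that $H^i$ lies on a ``descending'' path of weak reductions from some maximal GHS $H^k$ (or from an endpoint). Here I would invoke the fact, recorded in the excerpt, that weak reduction of a GHS $G$ along a weak reducing pair $(D,E)$ for $G_+$ only modifies one thick level and adds one thin level $G_+/DE$, while leaving all other thin levels of $G$ untouched; in particular $\thin{G} \subset \thin{H}$ where $H$ is the result of the weak reduction (before the cleanup moves of Definition~\ref{d:PreWeakReduction}, which only remove spheres, boundary-parallel pieces, and product regions). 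Thus the set of thin levels only grows, up to isotopy and the removal of inessential pieces, as one performs weak reductions. Since $F$ is incompressible, not a sphere, and (being a genuine barrier surface in $M$) not boundary-parallel or removable, if $F$ is isotopic to a thin level of the larger GHS $H^k$, then it survives as a thin level of every GHS obtained from $H^k$ by weak reductions, i.e. of every $H^i$ below $H^k$ in the sequence.

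Putting this together: from Lemma~\ref{l:FparallelToThinLevelGHS} applied to each maximal GHS and to the two endpoints of $\mathbf{H}$, $F$ is isotopic to a thin level of each of these; and every remaining GHS in $\mathbf{H}$ is obtained from one of these by weak reductions, so by the monotonicity of the thin-level set just described, $F$ is isotopic to a thin level of it as well. Hence $F$ is isotopic to a thin level of every element of $\mathbf{H}$.

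The main obstacle I anticipate is making precise the claim that ``the thin levels only grow under weak reduction up to isotopy'' in the presence of the cleanup operations (1)--(3) in Definition~\ref{d:PreWeakReduction}: one must check that none of these removals can destroy a copy of $F$. This is where incompressibility of $F$ and the fact that $F$ is a genuine $g$-barrier surface in $M$ (so it cannot be a sphere, a boundary-parallel surface, or one side of a product region disjoint from the rest of the structure) get used; a careful statement would probably isolate this as a small auxiliary observation before assembling the proof. A secondary point to be careful about is confirming that every non-maximal GHS of an irreducible SOG really is a weak reduction of a maximal GHS or of an endpoint — this follows from the definitions of SOG and of maximality, since walking outward from any $H^i$ one reaches either a maximal GHS or an endpoint of the sequence, and along the way each step is a weak reduction in the appropriate direction.
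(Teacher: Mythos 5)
Your proposal is correct and follows essentially the same route as the paper: apply Lemma~\ref{l:maximalGHS} to see that maximal GHSs are critical, apply Lemma~\ref{l:FparallelToThinLevelGHS} to those (and, in your version, also to the strongly irreducible endpoints), and then observe that every other GHS in the sequence is obtained from one of these by weak reductions, which may create thin levels but cannot destroy an incompressible one such as $F$. The only differences are cosmetic: you apply Lemma~\ref{l:FparallelToThinLevelGHS} directly to the endpoints rather than treating them as downstream of a maximal GHS, and you spell out why the cleanup moves (1)--(3) of Definition~\ref{d:PreWeakReduction} cannot remove a copy of $F$ (it is incompressible, not a sphere, not boundary-parallel, and not one side of a removable product region), which the paper compresses into a single sentence; one minor imprecision is the phrase ``walking outward from any $H^i$ one reaches either a maximal GHS or an endpoint'' --- walking in the direction of increasing complexity from a non-maximal, non-extremal $H^i$ always terminates at a maximal GHS, never an endpoint, since the endpoints are strongly irreducible and hence local minima, but this does not affect the argument.
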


\begin{proof}
By Lemma \ref{l:maximalGHS} each maximal GHS of $\bf H$ is critical. Hence, by Lemma \ref{l:FparallelToThinLevelGHS} $F$ is isotopic  to a thin level of every maximal GHS of $\bf H$. But every other GHS of $\bf H$ is obtained from a maximal GHS by a sequence of weak reductions and destabilizations. Such moves may create new thin levels, but will never destroy an incompressible thin level. Hence, $F$ is isotopic to a thin level of every element of $\bf H$. 
\end{proof}

\section{Lower bounds on stabilizations.}
\label{s:CounterExamples}

\begin{lem}
\label{l:LowerBoundTheorem}
Let $\{F_i\}_{i=1}^n$ denote a collection of $g$-barrier surfaces in $M$. Let $\{M_k\}_{k=1}^m$ denote the closures of the components of $M-\cup F_i$. Let ${\bf H}=\{H^j\}$ denote an irreducible SOG of $M$. If $F_1$ is isotopic to a unique thin level of $H^1$ and $H^m$, but is oriented in opposite ways in each of these GHSs, then \[\gen({\bf H}) \ge \min \{g, \sum \limits _{k} \gen(M_k)-\sum \limits _{i \ne 1} \gen(F_i)+n-m+1\}.\]
\end{lem}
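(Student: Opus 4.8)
The plan is to argue by contradiction: suppose $\gen({\bf H}) < g$, and derive that $\gen({\bf H})$ must be at least the claimed sum. Since ${\bf H}$ is an irreducible SOG of genus $g' := \gen({\bf H}) < g$, each $F_i$ is still a $g'$-barrier surface (being a $g$-barrier with $g' \le g$), so by Lemma \ref{l:FparallelToThinLevelSOG} every $F_i$ is isotopic to a thin level of every GHS $H^j$ in the sequence. The idea is then to fix an arbitrary GHS $H^j$ and compute a lower bound on $\gen(H^j)$ using Lemma \ref{l:GenusSum}, applied with $F = \bigcup_i F_i$ sitting inside $\thin{H^j}$. This gives
\[
\gen(H^j) = \sum_k \gen(H^j(M_k)) - \gen(F) + |F| - m + 1,
\]
where $|F| = n$ and the $M_k$ are exactly the closures of the components of $M \setminus \bigcup F_i$ (here one must be slightly careful: $F$ may consist of more than $n$ components if some $F_i$ is disconnected, but the statement's indexing suggests each $F_i$ is connected, or at worst one reinterprets $n$ as the number of components). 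The key remaining point is that $\gen(H^j(M_k)) \ge \gen(M_k)$, the Heegaard genus of $M_k$: this holds because $\amlg{H^j(M_k)}$ is some Heegaard splitting of $M_k$, hence has genus at least the minimal genus $\gen(M_k)$.

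The subtlety — and I expect this to be the crux — is the role of the hypothesis that $F_1$ is isotopic to a \emph{unique} thin level of $H^1$ and $H^m$ but with \emph{opposite orientations}. In the generic estimate above, we subtract $\gen(F) = \sum_i \gen(F_i)$, but the stated bound only subtracts $\sum_{i \ne 1} \gen(F_i)$, i.e., it does \emph{not} pay the cost of $F_1$. The explanation must be that, because $F_1$ appears with opposite transverse orientations in $H^1$ versus $H^m$, the two copies of $F_1$ cannot be "merged away" compatibly throughout the sequence — somewhere in the SOG the surface $F_1$ must be traversed or doubled, so that in computing the amalgamation genus one effectively retains a full copy of $F_1$ rather than cancelling it. Concretely, I would cut $M$ along only $\bigcup_{i \ne 1} F_i$ (not along $F_1$), obtaining components whose genera sum appropriately, and then argue that within the component(s) containing $F_1$, the orientation reversal forces the relevant sub-GHS to have genus at least $\gen(\text{that piece})$ \emph{plus} a correction accounting for $F_1$ — using that a thin level with a fixed orientation in a GHS determines on which side the compression bodies lie, so that reconciling $H^1$ and $H^m$ requires $F_1$ to become a thick-level-adjacent surface at some stage. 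This is the analogue of the "flip" phenomenon driving Theorem \ref{t:FlipCounterExample}.

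So the steps, in order, are: (1) assume $\gen({\bf H}) < g$ for contradiction, so all $F_i$ are barrier surfaces at the relevant genus; (2) invoke Lemma \ref{l:FparallelToThinLevelSOG} to place every $F_i$ as a thin level of every $H^j$; (3) pick the GHS realizing the genus of ${\bf H}$, or work with $H^1$/$H^m$, and apply Lemma \ref{l:GenusSum} with $F = \bigcup_{i\ne 1} F_i$ to write $\gen(H^j) = \sum_k \gen(H^j(M_k')) - \sum_{i\ne1}\gen(F_i) + (n-1) - m' + 1$ where the $M_k'$ are the pieces cut along $\bigcup_{i\ne 1}F_i$; (4) bound each $\gen(H^j(M_k')) \ge \gen(M_k')$ by minimality of Heegaard genus; (5) for the piece $M_{k_0}'$ containing $F_1$, use the opposite-orientation hypothesis together with Lemma \ref{l:FparallelToThinLevelGHS} and Definition \ref{d:GHS}(2) to show $\gen(H^j(M_{k_0}'))$ is actually at least $\gen(M_{k_0}') = \sum(\text{genera of the two }M_k\text{'s abutting }F_1) - \gen(F_1) + 1$ or similar, so that re-cutting along $F_1$ recovers exactly the claimed inequality; (6) combine to get $\gen({\bf H}) \ge \gen(H^j) \ge \sum_k \gen(M_k) - \sum_{i\ne1}\gen(F_i) + n - m + 1$, contradicting $\gen({\bf H}) < $ that quantity, and hence $\gen({\bf H}) \ge \min\{g, \dots\}$. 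The main obstacle is making step (5) precise — pinning down exactly how the orientation reversal of $F_1$ is "paid for" by an extra $+\gen(F_1)$ in the genus count, which is where the real content of the lemma lies.
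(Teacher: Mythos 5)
Your steps (1)--(4) match the paper's strategy, and you correctly flag that the real content is how the orientation reversal of $F_1$ produces an extra $\gen(F_1)$ in the count. But step (5) is where your proposal has a genuine gap, and the mechanism you sketch there is not the one that actually works.

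The paper's key observation, which your proposal is missing, is this: since a weak reduction cannot simultaneously destroy one thin level and create another, and since $F_1$ appears as a thin level of every GHS in the SOG but with different orientations in $H^1$ and $H^m$, there must exist some $j$ for which $H^j$ has \emph{two} thin levels isotopic to $F_1$ with \emph{opposite} orientations. Those two thin levels cobound a product region $P \cong F_1 \times I$, and the sub-GHS $H^j(P)$ amalgamates to a Heegaard splitting $\overline{P}$ of $P$ that does not separate the two boundary components (because of the orientation clash). By the Scharlemann--Thompson classification of Heegaard splittings of $(\text{surface})\times I$ \cite{st:93}, such a non-separating splitting is a stabilization of two parallel copies of $F_1$ joined by a tube, so $\gen(\overline{P}) \ge 2\gen(F_1)$. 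One then cuts along the $n-1$ surfaces $\{F_i^j\}_{i\ne 1}$ and the two boundary copies of $F_1$, producing $m+1$ pieces (the $m$ copies of the $M_k$ plus $P$), and applies Lemma \ref{l:GenusSum}: the $-2\gen(F_1)$ in the formula is exactly cancelled by $\gen(\overline{P}) \ge 2\gen(F_1)$, and the $(n+1)-(m+1)+1$ reduces to $n-m+1$, giving the stated bound.

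By contrast, your step (5) proposes to cut only along $\bigcup_{i\ne1}F_i$, keep $F_1$ inside one piece $M_{k_0}'$, and hope that the opposite-orientation hypothesis forces $\gen(H^j(M_{k_0}'))$ to be large. This cannot work at the level of a single arbitrary $H^j$: for a single GHS there is simply one orientation on each thin level, and nothing forces $H^j(M_{k_0}')$ to have genus exceeding the Heegaard genus of $M_{k_0}'$, which may be substantially less than the sum of the genera of the two pieces abutting $F_1$. The orientation contradiction only manifests when you compare $H^1$ to $H^m$, and the bridge between them is precisely the combinatorial fact about weak reductions that forces a \emph{doubled} $F_1$ with a product region at some intermediate stage. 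Without that fact, and without the Scharlemann--Thompson input to quantify the cost of the product region, the argument does not close.
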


\begin{proof}
Assume $\gen({\bf H}) \le g$. By Lemma \ref{l:FparallelToThinLevelSOG} the surface $F_1$ is then isotopic to a thin level of every GHS of $\bf H$. Weak reduction can not simultaneously kill one thin level and create a new one, so it follows that for some $j$, there is a GHS $H^j$ of $\bf H$ where $F_1$ is isotopic to two thin levels, but oriented differently. Let $P$ denote a submanifold of $M$ cobounded by two such thin levels. Let $\overline{P}=\amlg{H^j(P)}$. Then $\overline{P}$ is a Heegaard splitting of $P$ that does not separate its boundary components. As $P$ is homeomorphic to $F_1 \times I$, it follows from \cite{st:93} that $\overline{P}$  is a stabilization of two copies of $F_1$, connected by a tube. Hence, $\gen(\overline{P}) \ge 2 \gen(F_1)$. 

By Lemma \ref{l:FparallelToThinLevelSOG}, for each $i$ there is a thin level of $H^j$ which is isotopic to $F_i$. For each $i \ne 1$ choose one such thin level, and call it $F_i^j$. If we cut $M$ along $\{F^j_i|i \ne 1\}$, and then remove the interior of $P$, we obtain a collection of manifolds homeomorphic to $\{M_k\}$. We denote this collection as $\{M^j_k\}$. For each $k$, let $\overline{M_k}=\amlg{H^j(M^j_k)}$. It thus follows from Lemma \ref{l:GenusSum} that

\begin{eqnarray*}
\gen({\bf H}) & \ge & \gen(H^j)\\
&=& \sum \limits _k \gen(\overline{M_k})-\sum \limits _{i \ne 1} \gen(F_i) +\gen(\overline{P}) - 2\gen(F_1)\\
&&\hspace{.5in} +(n+1)-(m+1)+1\\
&\ge &\sum \limits _k \gen(M_k)-\sum \limits _{i \ne 1} \gen(F_i)+n-m+1
\end{eqnarray*}
\end{proof}

In the next three theorems we present our counter-examples to the Stabilization Conjecture. 

\begin{thm}
\label{t:FlipCounterExample}
For each $n \ge 4$ there is a closed, orientable 3-manifold that has a genus $n$ Heegaard splitting which must be stabilized at least $n-2$ times to become equivalent to the splitting obtained from it by reversing its orientation. 
\end{thm}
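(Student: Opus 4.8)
The plan is to build the manifold $M$ as an amalgamation-style construction along a single incompressible surface, and then feed it into Lemma~\ref{l:LowerBoundTheorem}. First I would take a 3-manifold $N$ with a single boundary component $F$ of genus $g = n-1$, chosen so that $N$ is irreducible, has incompressible boundary, is not an $I$-bundle, and carries a genus $n$ Heegaard splitting that restricts to a GHS with $F$ as a thin level; the cleanest source for such an $N$ is itself an amalgamation of two handlebody-like pieces glued along a high-genus surface. Then I would form $M$ by gluing two copies of $N$ together along $F$ via a ``sufficiently complicated'' homeomorphism $\phi$, i.e.\ one whose distance exceeds $Kg$ for the constant $K$ of Theorem~\ref{t:Barrier}. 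By that theorem the image surface $F \subset M$ is a $g$-barrier surface. The Heegaard splitting $H$ of $M$ is obtained by amalgamating the GHS whose thin level is $F$ and whose two thick levels are the genus $n$ splittings of the two copies of $N$; a genus count via Corollary~\ref{c:GHSgenus} (or Lemma~\ref{l:GenusSum}) gives $\gen(H)$ — I would arrange the pieces so this works out to $n$.

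Next I would compare $H$ with its orientation-reversal $\bar H$. The point of reversing orientation is that the transverse orientation on the thin level $F$ flips, so $F$ sits inside $\bar H$'s GHS ``pointing the other way.'' Now suppose $H$ and $\bar H$ become equivalent after $t$ stabilizations each; then there is a common stabilization, and stabilization/destabilization moves assemble into an irreducible SOG $\mathbf H$ from $H$ to $\bar H$ (after passing to an irreducible SOG via the SOG-reduction machinery, using Lemma~\ref{l:maximalGHS} and Lemma~\ref{l:GenusGoesDown}). In this SOG, $F$ is isotopic to a unique thin level of the first GHS and of the last GHS, but with opposite orientations — exactly the hypothesis of Lemma~\ref{l:LowerBoundTheorem} with $n_{\mathrm{barriers}} = 1$ and $m = 2$ (two copies of $N$, no other barrier surfaces, so the $\sum_{i\neq 1}\gen(F_i)$ term is empty). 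That lemma then forces
\[
\gen(\mathbf H) \ge \min\Bigl\{ g,\ \gen(M_1) + \gen(M_2) - 0 + 1 - 2 + 1 \Bigr\}.
\]
I would choose $N$ so that each $\gen(M_k) = \gen(N) = n$, giving the right-hand side $\min\{g, 2n\} = g = n-1$. But $\gen(\mathbf H)$ is the genus of the largest GHS in the stabilization sequence, which is at most $n + t$ — wait, that is the wrong direction. Let me reorganize: actually the inequality to exploit is that $\gen(\mathbf H) \le n + t$ is automatic while Lemma~\ref{l:LowerBoundTheorem} is a \emph{lower} bound; so I need the lower bound to be large, i.e.\ I want $\min\{g, \dots\}$ large, and I want the genus of the stabilized splitting, roughly $n+t$, to be forced $\ge$ that bound only if $n+t \le g$ fails. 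The correct bookkeeping is: if $t < n-2$ then $\gen(\mathbf H) \le n + t < 2n - 2$, and more importantly the common stabilization has genus $n+t$, so $\gen(\mathbf H) = n+t$; Lemma~\ref{l:LowerBoundTheorem} with the arithmetic above then shows $n + t \ge$ the relevant quantity, which I must arrange to equal $2(n-1) = 2\gen(F)$ — indeed the proof of that lemma produces a product region $P \cong F \times I$ whose amalgamated splitting has genus $\ge 2\gen(F) = 2n-2$, and the total genus is at least $2n-2$ minus corrections that cancel, forcing $n + t \ge 2n - 2$, i.e.\ $t \ge n - 2$.

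The main obstacle, and the step I would spend the most care on, is the genus accounting: making sure the building block $N$ and its chosen genus $n$ splitting really do yield (i) a GHS of $M$ whose amalgamation has genus exactly $n$, and (ii) the numerical identity in Lemma~\ref{l:LowerBoundTheorem} collapsing to the clean bound $t \ge n-2$. This requires picking $\gen(F) = n-1$ and checking that the ``$+\gen(\overline P) - 2\gen(F_1)$'' and ``$+(n{+}1)-(m{+}1)+1$'' terms in the lemma's displayed computation combine correctly when there is only the one barrier surface; the potential pitfall is an off-by-one in the Euler-characteristic / genus bookkeeping or in whether $F$ lies in the interior of $M$ (it does, so it contributes to $\thin{H}^\circ$). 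A secondary technical point is verifying that $N$ admits a genus $n$ splitting with $F$ as a thin level and with enough room that the amalgamation does not accidentally destabilize; invoking \cite{barrier} via Theorem~\ref{t:Barrier} handles the barrier property, and \cite{st:93} handles the $F \times I$ product-splitting fact used inside Lemma~\ref{l:LowerBoundTheorem}, so those can be cited rather than reproved. Finally I would note the constraint $n \ge 4$ arises exactly from needing $g = n-1 \ge 2$ (so Theorem~\ref{t:Barrier}'s curve-complex distance makes sense) together with $t = n-2 \ge 1$ being a nontrivial statement, which already needs $n \ge 3$; the $n\ge 4$ presumably comes from requiring the construction of $N$ with incompressible genus $\ge 2$ boundary and the Heegaard genus strictly exceeding that, i.e.\ $n - 1 \ge 2$ and a little slack.
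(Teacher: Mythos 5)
Your overall strategy is the paper's: glue two pieces along a high-genus incompressible surface by a sufficiently complicated map so the gluing surface becomes a barrier, take a GHS with that surface as the thin level and amalgamate, and then feed an irreducible SOG between $\amlg{H^1}$ and its orientation-reversal into Lemma~\ref{l:LowerBoundTheorem}. But the numerics you chose do not close, and you flagged the trouble yourself mid-argument without resolving it. With two thick levels of genus $a$ and one thin level of genus $g$, Corollary~\ref{c:GHSgenus} gives amalgamation genus $2a - g$; you set $g = n-1$ and $a = n$, which yields $\gen(\amlg{H^1}) = n+1$, not $n$. Worse, you only arranged the gluing distance to exceed $Kg$ with $g = n-1$, so Theorem~\ref{t:Barrier} makes $F$ merely an $(n-1)$-barrier surface; the $\min$ in Lemma~\ref{l:LowerBoundTheorem} then truncates the lower bound at $n-1$, which is already below the genus of your unstabilized splitting, so the bound is vacuous. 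The ``reorganized bookkeeping'' at the end, claiming the lemma forces $n+t \ge 2n-2$, cannot be extracted from the lemma when the barrier level is only $n-1$: the $\min$ caps it.

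The parameters that make everything work, as in the paper's proof, are $\gen(F) = g = n-2$ and $\gen(H_i) = g+1 = n-1$ for each piece, with the gluing complicated enough that $F$ is a $(2g+2)$-barrier surface (distance $\ge K(2g+2)$, not $\ge Kg$). Then $\gen(\amlg{H^1}) = 2(g+1) - g = g+2 = n$, and Lemma~\ref{l:LowerBoundTheorem} with one barrier surface and $m=2$ pieces gives $\gen({\bf K}) \ge \min\{2g+2,\ (g+1)+(g+1)\} = 2g+2 = 2n-2$, so the common stabilization has genus at least $2n-2$, i.e.\ at least $(2n-2) - n = n-2$ stabilizations are required. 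The threshold $n \ge 4$ is then exactly $g = n-2 \ge 2$, not $g = n-1 \ge 2$ as you wrote. Everything else in your outline — the role of \cite{st:93} inside Lemma~\ref{l:LowerBoundTheorem}, the use of Lemmas~\ref{l:maximalGHS} and \ref{l:GenusGoesDown} to pass to an irreducible SOG, and the observation that reversing orientation flips the transverse orientation on $F$ — is correct and matches the paper.
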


\begin{proof}
Let $M_1$ and $M_2$ be 3-manifolds that have one boundary component homeomorphic to a genus $g$ surface, $F$ (where $g \ge 2$). For each $i$ the manifold $M_i$ has a strongly irreducible Heegaard splitting $H_i$ of genus $g+1$.

Now glue $M_1$ and $M_2$ along their boundaries by a ``sufficiently complicated" map, so that by Theorem \ref{t:Barrier} the gluing surface $F$ becomes a $(2g+2)$-barrier surface. Let $M$ be the resulting 3-manifold. A GHS $H^1$ of $M$ is then defined by:
	\begin{enumerate}
		\item $\thick{H^1}=\{H_1, H_2\}$
		\item $\thin{H^1}=\{F\}$
	\end{enumerate}
	
Choose an orientation on $H^1$. Let $H^*$ denote the GHS with the same thick and thin levels, but with opposite orientation. Then $\amlg{H^*}$ is a Heegaard splitting of $M$ that is obtained from the splitting  $\amlg{H^1}$ by reversing its orientation. By Corollary \ref{c:GHSgenus} the genera of these splittings is \[n=2(g+1)-g=g+2.\] We now claim that these splittings are not equivalent after any less than $g=n-2$ stabilizations. Let $H$ denote the minimal genus common stabilization of these splittings. We must show $\gen(H) \ge (g+2)+g=2g+2$. 

Let ${\bf H}=\{H^i\}_{i=1}^n$ be the SOG where 
	\begin{enumerate}
		\item $H^1$ is as defined above, 
		\item $H^n=H^*$,
		\item for some $1<j<n$, $\thick{H^j}=\{H\}$ and $\thin{H^j}=\emptyset$, and
		\item $H^j$ is the only maximal GHS in $\bf H$. 
	\end{enumerate}

Let ${\bf K}$ be a SOG obtained from $\bf H$ by a maximal sequence of SOG reductions. By Lemma \ref{l:GenusGoesDown}, $\gen({\bf H}) \ge \gen({\bf K})$. Since the orientations on $F$ disagree in the initial and final GHS of $\bf H$, this must also be true of $\bf K$. Hence, by Lemma \ref{l:LowerBoundTheorem}, 
\begin{eqnarray*}
\gen({\bf K}) & \ge & \gen(M_1) + \gen(M_2)\\
&=&(g+1)+(g+1)\\
&=&2g+2
\end{eqnarray*}
Hence, $\gen(H) = \gen({\bf H}) \ge \gen({\bf K}) \ge 2g+2$. 
\end{proof}

\begin{thm}
\label{t:TorusBoundaryCounterExamples}
For each $n \ge 5$ there is an orientable 3-manifold whose boundary is a torus, that has two genus $n$ Heegaard splittings which must be stabilized at least $n-4$ times to become equivalent.
\end{thm}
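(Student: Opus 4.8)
The plan is to mimic the construction in the proof of Theorem \ref{t:FlipCounterExample}, but with three pieces glued in a chain so that the resulting manifold has a single torus boundary component, and so that the two Heegaard splittings being compared genuinely differ (not merely by orientation). First I would take three compact, orientable, irreducible 3-manifolds with incompressible boundary and no $I$-bundle components. The middle piece $M_0$ should have two boundary components homeomorphic to a fixed genus $g$ surface $F$ (with $g\ge 2$), while $M_1$ has one genus $g$ boundary component plus a torus boundary component $T$, and $M_2$ has one genus $g$ boundary component. I would arrange that each $M_k$ has an appropriate strongly irreducible Heegaard splitting of a controlled genus (as in Theorem \ref{t:FlipCounterExample}, where this amounts to a standard splitting obtained by tubing together boundary-parallel surfaces; genus roughly $g+1$ for $M_0$ and $M_2$, and similar for $M_1$ adjusting for the torus). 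Then glue $M_1$ to $M_0$ to $M_2$ along the two copies of $F$ using ``sufficiently complicated'' maps, invoking Theorem \ref{t:Barrier} twice, so that both copies of $F$ in the resulting manifold $M$ become $g'$-barrier surfaces for the relevant value $g'$ (chosen so that $2g'$ exceeds all the thick-level genera in play). The manifold $M$ has torus boundary.

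Next I would build two GHSs of $M$ with the same thick/thin levels $\{H_1,H_0,H_2\}$ and $\{F,F'\}$, but with different \emph{orientations} on the thin levels — specifically, choosing orientations so that in $H_{(1)}$ the two copies of $F$ are both ``compatibly oriented'' with the chain ordering, whereas in $H_{(2)}$ one of them is reversed. Amalgamating gives two Heegaard splittings $\amlg{H_{(1)}}$ and $\amlg{H_{(2)}}$ of $M$, whose genus $n$ I would compute via Corollary \ref{c:GHSgenus}. The claim is that a common stabilization $H$ must satisfy $\gen(H)\ge n+(n-4)$, i.e. $n-4$ stabilizations are required.

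To prove the lower bound I would form an irreducible SOG $\bf H$ interpolating between $\amlg{H_{(1)}}$ (viewed as a GHS with empty thin levels) and $\amlg{H_{(2)}}$, passing through a maximal GHS $H^j$ realizing the common stabilization, exactly as in Theorem \ref{t:FlipCounterExample}; reduce it to an irreducible SOG $\bf K$ via Lemma \ref{l:GenusGoesDown}. Since the orientation of one of the barrier surfaces, say $F$, disagrees between the first and last GHS, Lemma \ref{l:LowerBoundTheorem} applies with that surface playing the role of ``$F_1$'' and the other barrier surface among the $F_i$, $i\ne 1$. This yields
\[
\gen({\bf K}) \ge \min\Big\{g',\ \sum_k \gen(M_k) - \gen(F') + n' - m' + 1\Big\},
\]
where here $n'=2$ barrier surfaces, $m'=3$ pieces, and $\sum_k \gen(M_k)$ is the sum of the Heegaard genera of the three pieces. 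Plugging in the genera of the $M_k$ and simplifying should give the bound $2g'$ (with $g'$ chosen large enough that the minimum is attained by the second term), hence $\gen(H)\ge 2g'$, which translates into $n-4$ stabilizations once $n$ is expressed in terms of $g$ via Corollary \ref{c:GHSgenus}.

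The main obstacle I anticipate is twofold: first, getting the orientation bookkeeping right so that in $H_{(2)}$ exactly one barrier surface is reversed while the GHS axioms of Definition \ref{d:GHS} (the partial-ordering conditions on thin levels) are still satisfied — this is what forces the ``chain of three'' rather than a single gluing, and it is where the hypothesis $n\ge 5$ (versus $n\ge 4$) will come from. Second, choosing the pieces $M_1,M_0,M_2$ and their splittings so that the arithmetic of Corollary \ref{c:GHSgenus} and Lemma \ref{l:LowerBoundTheorem} produces exactly $n-4$ and not something weaker; in particular $M_1$ must be chosen so that its torus boundary component (which is \emph{not} glued up, hence lies in $\thin{H}$ but on $\bdy M$, and so does not contribute to $\thin{H}^\circ$) interacts correctly with the genus formula. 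I would also need to double-check, as in Theorem \ref{t:FlipCounterExample}, that $\amlg{H_{(1)}}$ and $\amlg{H_{(2)}}$ really are inequivalent even before stabilization — but this follows a posteriori from the lower bound, since $n-4\ge 1$ for $n\ge 5$.
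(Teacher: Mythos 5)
Your approach has a genuine gap, and it lies precisely where you flagged uncertainty: the ``orientation bookkeeping.''

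You propose a chain $M_1 - F - M_0 - F' - M_2$ (with $T\subset\partial M_1$), with thick levels $H_1, H_0, H_2$ of genus roughly $g+1$ in each piece, and two GHSs $H_{(1)}, H_{(2)}$ having the \emph{same} thick and thin levels but with one of $F, F'$ oriented oppositely. The problem is that no such pair exists. In each piece, a strongly irreducible Heegaard surface of genus $g+1$ must separate the two genus-$g$ (resp.\ genus-$g$ and torus) boundary components: if both were in $\partial_-$ of one compression body, that compression body would have $\mathrm{genus}(\partial_-) > \mathrm{genus}(\partial_+)$, which is impossible. So $F$ and $T$ are on opposite sides of $H_1$, $F$ and $F'$ are on opposite sides of $H_0$, etc. Once you fix a transverse orientation on $H_1$, condition (2) of Definition \ref{d:GHS} forces the orientation of $F$, which forces the orientation of $H_0$, which forces $F'$, which forces $H_2$ --- and it also forces $T$. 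There are exactly two valid GHSs with these thick levels, and they are the global flip of each other, so they disagree on $T$ as well as on $F$ and $F'$. But for the two amalgamations to have a common stabilization at all, $T$ must lie on the same side of both Heegaard surfaces. So the two GHSs you can build are not comparable, and Lemma \ref{l:LowerBoundTheorem} never applies.

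The paper's proof gets around exactly this linkage by changing one of the \emph{thick} levels rather than merely reorienting: it replaces $H_1$ (genus $g+1$, with $F$ and $T$ on opposite sides) by $G_1$, the boundary-stabilization of $H_1$ along $T$ (genus $g+2$, with $F$ and $T$ now on the \emph{same} side). Because the side of $T$ relative to the thick level has swapped, choosing orientations on $H^1=\{H_1,H_2\}$ and $H^*=\{G_1,H_2\}$ so that $T$ agrees automatically forces $F$ to disagree --- this is the content of Figure \ref{f:H1H*Orientations}. That one move is what decouples the orientation of $F$ from the orientation of $T$, and it is the missing idea in your write-up. (Note also that the paper needs only two pieces, not three; the torus boundary lives on one of them, the $+1$ in genus from boundary-stabilizing $G_1$ accounts for the drop from $n-2$ in Theorem \ref{t:FlipCounterExample} to $n-4$ here, and the shift from $n\ge 4$ to $n\ge 5$ is simply because the minimal genus case is $g=2$, giving $n=g+3=5$.) Your reduction to Lemma \ref{l:LowerBoundTheorem} via an irreducible SOG and Lemma \ref{l:GenusGoesDown} is the right downstream machinery, but it cannot be engaged until the two GHSs with $T$ on the same side and $F$ reversed are actually produced.
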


\begin{proof}
Let $M_1$ and $M_2$ be 3-manifolds that have one boundary component homeomorphic to a genus $g$ surface, $F$ (where $g \ge 2$). The manifold $M_1$ also has a boundary component $T$ that is a torus. The manifold $M_2$ has no boundary components other than $F$. For each $i$ the manifold $M_i$ has a strongly irreducible Heegaard splitting $H_i$ of genus $g+1$. The manifold $M_1$ then has a genus $g+2$ Heegaard splitting $G_1$, obtained from $H_1$ by boundary stabilizing along $T$. 

Now glue $M_1$ and $M_2$ along their genus $g$ boundary components by a ``sufficiently complicated" map, so that by Theorem \ref{t:Barrier} the gluing surface $F$ becomes a $(2g+2)$-barrier surface. Let $M$ be the resulting 3-manifold. GHSs $H^1$ and $H^*$  of $M$ are then defined by:
	\begin{enumerate}
		\item $\thick{H^1}=\{H_1, H_2\}$
		\item $\thick{H^*}=\{G_1,H_2\}$
		\item $\thin{H^1}=\thin{H^*}=\{F, T\}$
	\end{enumerate}
	
Choose orientations on $H^1$ and $H^*$ so that the orientations on $T$ agree. Then both $\amlg{H^1}$ and $\amlg{H^*}$ are Heegaard splittings of $M$, with $T$ on the same side of each.  Hence, these two splittings have some common stabilization, $H$. Note also that the orientations on $F$ in $H^1$ and $H^*$ necessarily disagree. See Figure \ref{f:H1H*Orientations}.

        \begin{figure}[htbp]
        \psfrag{1}{$H^1$}
        \psfrag{2}{$H^*$}
        \psfrag{F}{$F$}
        \psfrag{H}{$H_1$}
        \psfrag{G}{$G_1$}
        \psfrag{h}{$H_2$}
        \psfrag{T}{$T$}
        \vspace{0 in}
        \begin{center}
       \includegraphics[width=4.5 in]{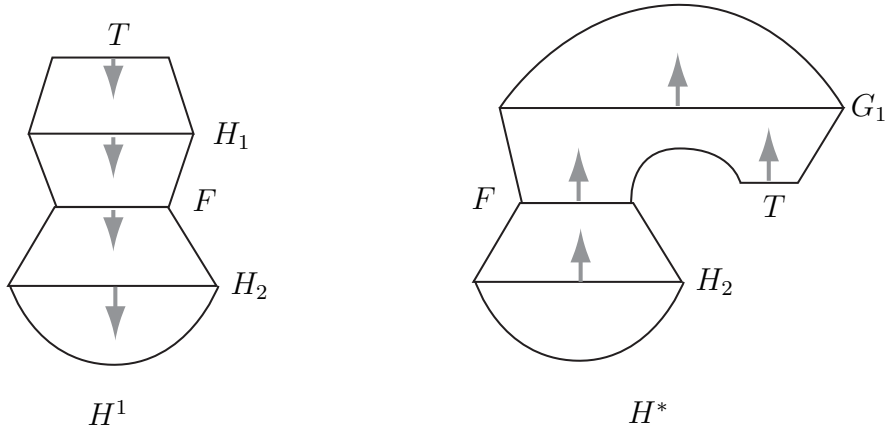}
       \caption{If the orientations on $T$ in $H^1$ and $H^*$ agree, then the orientations on $F$ disagree.}
        \label{f:H1H*Orientations}
        \end{center}
        \end{figure}

By Corollary \ref{c:GHSgenus} the genus of $\amlg{H^1}$  is \[2(g+1)-g=g+2.\] The genus of $\amlg{H^*}$ is one higher, $g+3$. Let this number be $n$. We now claim that we must stabilize $\amlg{H^*}$ at least $g-1=n-4$ times to obtain a stabilization of $\amlg{H^1}$. In other words, we claim \[\gen(H) \ge (g+3)+(g-1)=2g+2.\] 

Let ${\bf H}=\{H^i\}_{i=1}^n$ be the SOG where 
	\begin{enumerate}
		\item $H^1$ is as defined above, 
		\item $H^n=H^*$,
		\item for some $1<j<n$, $\thick{H^j}=\{H\}$ and $\thin{H^j}=\emptyset$, and
		\item $H^j$ is the only maximal GHS in $\bf H$. 
	\end{enumerate}

Let ${\bf K}$ be a SOG obtained from $\bf H$ by a maximal sequence of SOG reductions. By Lemma \ref{l:GenusGoesDown}, $\gen({\bf H}) \ge \gen({\bf K})$. Since the orientations on $F$ disagree in the initial and final GHS of $\bf H$, this must also be true of $\bf K$. Hence, by Lemma \ref{l:LowerBoundTheorem}, 
\begin{eqnarray*}
\gen({\bf K}) & \ge & \gen(M_1) + \gen(M_2)\\
&=&(g+1)+(g+1)\\
&=&2g+2
\end{eqnarray*}
Hence, $\gen(H) = \gen({\bf H}) \ge \gen({\bf K}) \ge 2g+2$. 
\end{proof}

\begin{thm}
\label{t:ClosedCounterExamples}
For each $n \ge 8$ there is a closed, orientable 3-manifold that has a pair of genus $n$ Heegaard splittings which must be stabilized at least $\frac{1}{2}n -3$ times to become equivalent (regardless of their orientations).
\end{thm}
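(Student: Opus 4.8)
The plan is to mimic the two-piece constructions of Theorems \ref{t:FlipCounterExample} and \ref{t:TorusBoundaryCounterExamples}, but now glue together \emph{three} building blocks along two incompressible surfaces so that a lower bound coming from Lemma \ref{l:LowerBoundTheorem} forces a large common stabilization no matter how the two splittings are oriented. Concretely, take closed-up pieces $M_1, M_2, M_3$ where $M_1$ has a single genus $g$ boundary component $F_1$, $M_3$ has a single genus $g$ boundary component $F_2$, and $M_2$ has two genus $g$ boundary components, one glued to $F_1$ and one to $F_2$. Each $M_i$ is chosen to have a strongly irreducible Heegaard splitting of genus $g+1$. Glue along both $F_1$ and $F_2$ by ``sufficiently complicated'' maps, so that by Theorem \ref{t:Barrier} both gluing surfaces become $(cg)$-barrier surfaces for a suitable constant; call the resulting closed manifold $M$, and let $F_1, F_2$ denote the images of the two gluing surfaces in $M$. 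Amalgamating the three strongly irreducible splittings across $\{F_1,F_2\}$ gives a GHS $H^1$ with $\thick{H^1}=\{H_1,H_2,H_3\}$ and $\thin{H^1}=\{F_1,F_2\}$; by Corollary \ref{c:GHSgenus} its genus is $3(g+1)-2g = g+3$.

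The second splitting $H^*$ should be engineered so that, whatever pair of orientations one puts on $H^1$ and $H^*$, the induced orientations on at least one of $F_1, F_2$ disagree. The way to achieve this is to let $H^*$ be (an amalgamation realizing) the ``flip'' of $H^1$ across the middle block: arrange the thin-level orientations of $H^*$ so that reversing the global orientation of $H^*$ toggles which of $F_1$ or $F_2$ is the surface on which it differs from $H^1$. Then for \emph{both} choices of relative orientation, Lemma \ref{l:FparallelToThinLevelSOG} combined with the argument in Lemma \ref{l:LowerBoundTheorem} applies to whichever $F_i$ has mismatched orientation, and a tube-containing product region must appear in some intermediate GHS of any irreducible SOG joining $H^1$ to $H^*$. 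This is exactly the mechanism used in the first two theorems, and it is the reason we need two barrier surfaces rather than one: one barrier surface handles one orientation but not its reverse.

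Now set $n = g+3$ (choosing $H^*$ to also have genus $g+3$; if the natural $H^*$ comes out at genus $g+4$ as in Theorem \ref{t:TorusBoundaryCounterExamples}, relabel accordingly and the bound shifts by a bounded amount) and let $H$ be a minimal-genus common stabilization. Build a SOG ${\bf H}=\{H^i\}$ from $H^1$ down to a single maximal thick level $H$ and back up to $H^*$, apply a maximal sequence of SOG reductions to get ${\bf K}$, note by Lemma \ref{l:GenusGoesDown} that $\gen({\bf H})\ge\gen({\bf K})$, and observe that the orientation mismatch on the relevant $F_i$ persists in ${\bf K}$. Applying Lemma \ref{l:LowerBoundTheorem} with the two barrier surfaces $\{F_1,F_2\}$ and the three pieces $\{M_1,M_2,M_3\}$ (here $n_{\text{bar}}=2$, $m_{\text{pieces}}=3$) yields
\[
\gen({\bf K}) \;\ge\; \sum_{k=1}^{3}\gen(M_k) \;-\; \gen(F_{i'}) \;+\; 2 - 3 + 1 \;=\; 3(g+1) - g \;=\; 2g+3,
\]
where $F_{i'}$ is the barrier surface other than the mismatched one. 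Hence $\gen(H)\ge 2g+3 = n + (g) = n + (n-3)$, wait—more carefully: $2g+3 = (g+3) + g = n + (n-3)$, so $H$ requires at least $n-3$ stabilizations; since this bound holds for \emph{both} relative orientations, we in fact want the version that is robust under orientation reversal, and a short bookkeeping check of which block gets ``doubled'' in the worst case replaces $g$ by roughly $\frac{1}{2}g$, giving the stated $\frac12 n - 3$. I expect the main obstacle to be precisely this last bookkeeping step: verifying that for \emph{every} choice of orientations one genuinely gets a mismatched barrier surface and that the weakest of the resulting lower bounds is still $\frac12 n - 3$ rather than something smaller — in particular getting the constants in the three-block Euler-characteristic count and the ``$\gen(\overline P)\ge 2\gen(F)$'' product-region estimate to line up. The topological inputs (barrier surfaces from Theorem \ref{t:Barrier}, the Scharlemann–Thompson product-region fact, Lemma \ref{l:LowerBoundTheorem}) are all already in hand.
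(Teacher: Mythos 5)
There is a genuine gap, and it is exactly the step you flag as the ``main obstacle.'' With your three-block chain $M_1 - F_1 - M_2 - F_2 - M_3$, the gluing graph is linear, so the partial order on thin levels of any GHS over $\{F_1,F_2\}$ is in fact a total order: either $M_1$ is below $M_2$ is below $M_3$, or the reverse. Consequently, reversing the transverse orientation of a GHS over this decomposition reverses the orientations of \emph{both} thin levels simultaneously --- there is no way to ``toggle which of $F_1$ or $F_2$ differs'' as you propose. So for any $H^1$ and $H^*$ built over this linear chain, either the orientations agree on both $F_1$ and $F_2$ or disagree on both; in the agreeing case Lemma \ref{l:LowerBoundTheorem} never engages and you get no lower bound. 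That is precisely why a two- or three-piece linear construction proves Theorem \ref{t:FlipCounterExample} (where you control the orientation) but cannot prove Theorem \ref{t:ClosedCounterExamples} (where you must win for every orientation).

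The paper gets around this with a \emph{four}-block construction whose gluing graph contains a cycle: two of the pieces ($M_1$, $M_2$) have torus boundary components $T_1,T_2$ glued to a third piece $M_3$, while $M_1$ and $M_2$ are also glued to each other along a genus-$g$ surface $F_1$, and $M_4$ hangs off $M_3$ along a second genus-$g$ surface $F_2$. The two GHSs $H^1$ and $H^*$ share all thin levels $\{F_1,F_2,T_1,T_2\}$ but differ in which of $M_1, M_2$ carries the boundary-stabilized thick level ($G_1$ versus $G_2$). Because of the cycle $M_1 - T_1 - M_3 - T_2 - M_2 - F_1 - M_1$, the constraints that each GHS have a well-defined partial order force the following dichotomy: if the two GHSs induce the same orientation on $F_1$ they must induce opposite orientations on $F_2$, and vice versa. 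That is the robustness-under-reversal you were trying to manufacture, and it genuinely requires a non-tree gluing graph (plus the asymmetric boundary stabilizations) rather than a linear one. The resulting genus count is $n = 3(g+1) + (g+2) - 2g - 2 + 1 = 2g+4$, and Lemma \ref{l:LowerBoundTheorem} with four pieces and four barrier surfaces gives a common-stabilization genus of at least $4(g+1) - 2\cdot 1 - g + 1 = 3g+3$, hence $g-1 = \tfrac{1}{2}n - 3$ stabilizations. So the factor of $\tfrac12$ is not a bookkeeping fudge applied to a three-block count; it comes directly from needing a fourth piece and two extra (toroidal) barrier surfaces to make the orientation argument work at all.
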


\begin{proof}
Let $M_1$, $M_2$, $M_3$, and $M_4$ be 3-manifolds as follows. Each of these manifolds has one boundary component homeomorphic to a genus $g$ surface, $F$ (where $g \ge 2$), and a Heegaard splitting $H_i$ of genus $g+1$ that separates $F$ from any other boundary component. The manifolds $M_1$ and $M_2$ have a second boundary component, which is a torus. The manifold $M_3$ has two toroidal boundary components. The manifold $M_4$ has no boundary components other than $F$. For $i=1$ and $2$ the manifolds $M_i$  also have a second Heegaard surface, $G_i$, of genus $g+2$ obtained from $H_i$ by boundary stabilizing along the torus boundary component. 

Now glue all four manifolds together as in Figure \ref{f:H1GHS} by ``sufficiently complicated" maps so that by Theorem \ref{t:Barrier} both copies of $F$, and both gluing tori, become $(3g+3)$-barrier surfaces. Let $M$ be the resulting 3-manifold. For $i=1$ and 2 let $T_i$ denote the torus between $M_i$ and $M_3$. Let $F_1$ denote the copy of $F$ between $M_1$ and $M_2$, and $F_2$ the copy of $F$ between $M_3$ and $M_4$. 

        \begin{figure}[htbp]
        \psfrag{a}{$M_1$}
        \psfrag{b}{$M_2$}
        \psfrag{c}{$M_3$}
        \psfrag{d}{$M_4$}
        \psfrag{G}{$G_1$}
        \psfrag{g}{$G_2$}
        \psfrag{t}{$T_1$}
        \psfrag{t}{$T_1$}
        \psfrag{T}{$T_2$}
        \psfrag{f}{$F_1$}
        \psfrag{F}{$F_2$}
        \psfrag{1}{$H_1$}
        \psfrag{2}{$H_2$}
        \psfrag{3}{$H_3$}
        \psfrag{4}{$H_4$}
        \vspace{0 in}
        \begin{center}
       \includegraphics[width=4.5 in]{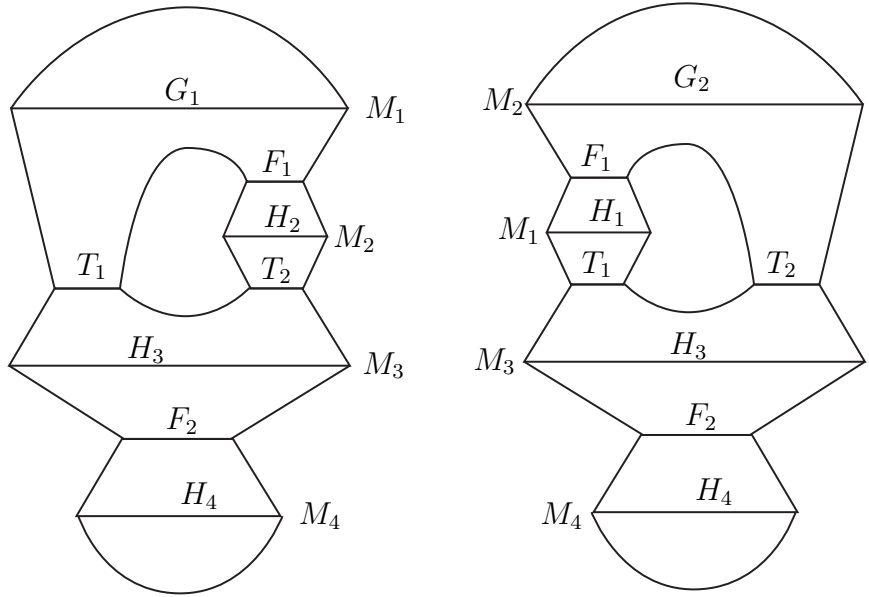}
       \caption{The GHSs, $H^1$ and $H^*$.}
        \label{f:H1GHS}
        \end{center}
        \end{figure}

We now define two GHSs $H^1$ and $H^*$ of $M$ (See Figure \ref{f:H1GHS}):

\begin{enumerate}
	\item $\thick{H^1}=\{G_1, H_2, H_3, H_4\}$
	\item $\thick{H^*}=\{H_1, G_2, H_3, H_4\}$. 
	\item $\thin{H^1}=\thin{H^*}=\{F_1, F_2, T_1, T_2\}$
\end{enumerate}

By definition, $\amlg{H^1}$ and $\amlg{H^*}$ are both Heegaard splittings of $M$. By Corollary \ref{c:GHSgenus} the genera of these splittings is \[n=3(g+1)+(g+2)-2g-2+1=2g+4.\] We claim that no matter what orientation is chosen for these GHSs, they are not equivalent after any less than $g-1=\frac{1}{2}n-3$ stabilizations. Let $H$ denote the minimal genus common stabilization of these splittings. We must show $\gen(H) \ge (2g+4)+(g-1)=3g+3$. 

Orient $H^1$ and $H^*$. Note that if these orientations agree on $F_1$ then they disagree on $F_2$. See Figure \ref{f:F1F2Orientations}. Hence, any SOG that interpolates between $H^1$ and $H^*$ must reverse the orientation of either $F_1$ or $F_2$.

        \begin{figure}[htbp]
        \psfrag{a}{(a)}
        \psfrag{b}{(b)}
        \psfrag{1}{$H^1$}
        \psfrag{f}{$F_1$}
        \psfrag{F}{$F_2$}
        \vspace{0 in}
        \begin{center}
       \includegraphics[width=4.5 in]{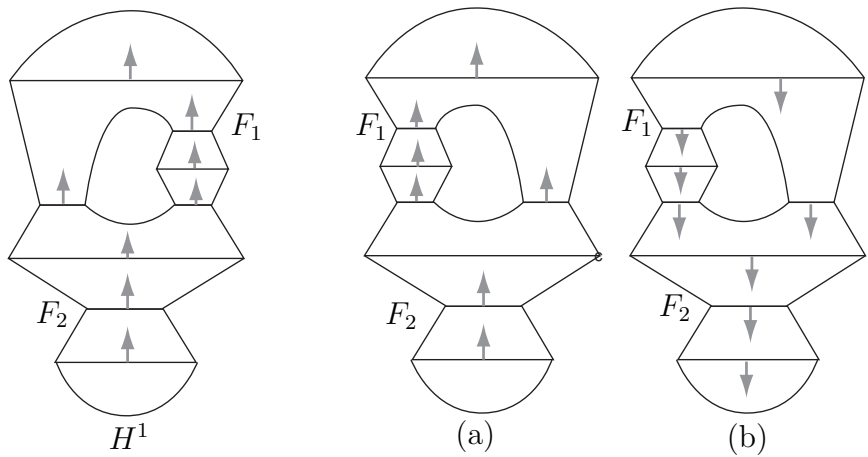}
       \caption{An orientation on $H^1$ and two possible orientations on $H^*$. In $H^1$ the manifold $M_1$ is above $F_1$. In Case (a) the manifold $M_1$ is below $F_1$. Hence, the orientations on $F_1$ in $H^1$ and $H^*$ disagree. In Case (b) the orientations on $F_2$ disagree.}
        \label{f:F1F2Orientations}
        \end{center}
        \end{figure}

Let ${\bf H}=\{H^i\}_{i=1}^n$ be the SOG where 
	\begin{enumerate}
		\item $H^1$ is as defined above, 
		\item $H^n=H^*$,
		\item for some $1<j<n$, $\thick{H^j}=\{H\}$ and $\thin{H^j}=\emptyset$, and
		\item $H^j$ is the only maximal GHS in $\bf H$. 
	\end{enumerate}

Let ${\bf K}=\{K^i\}$ be a SOG obtained from $\{H^i\}$ by a maximal sequence of SOG reductions. By Lemma \ref{l:GenusGoesDown}, $\gen({\bf H}) \ge \gen({\bf K})$. By Lemma \ref{l:LowerBoundTheorem}, 
\begin{eqnarray*}
\gen({\bf K}) & \ge & \sum \limits_{i=1} ^4 \gen(M_i) - \gen(T_1)-\gen(T_2)-\gen(F)+1\\
&=&4(g+1)-2-g+1\\
&=&3g+3
\end{eqnarray*}
Hence, $\gen(H) = \gen({\bf H}) \ge \gen({\bf K}) \ge 3g+3$. 
\end{proof}

\bibliographystyle{alpha}
%\bibliography{master}

\begin{thebibliography}{Hem01}

\bibitem[Baca]{barrier}
D.~Bachman.
\newblock Barriers to topologically minimal surfaces.
\newblock Preprint.

\bibitem[Bacb]{AmalgamationResults}
D.~Bachman.
\newblock Heegaard splittings of sufficiently complicated 3-manifolds {II}:
  {A}malgamation.
\newblock Preprint.

\bibitem[Bacc]{TopIndexI}
D.~Bachman.
\newblock Topological {I}ndex {T}heory for surfaces in 3-manifolds.
\newblock Preprint.

\bibitem[Bac02]{crit}
D.~Bachman.
\newblock Critical {H}eegaard surfaces.
\newblock {\em Trans. Amer. Math. Soc.}, 354(10):4015--4042 (electronic), 2002.

\bibitem[Bac08]{gordon}
D.~Bachman.
\newblock Connected {S}ums of {U}nstabilized {H}eegaard {S}plittings are
  {U}nstabilized.
\newblock {\em Geometry \& Topology}, (12):2327--2378, 2008.

\bibitem[CG87]{cg:87}
A.~J. Casson and C.~McA. Gordon.
\newblock Reducing {H}eegaard splittings.
\newblock {\em Topology and its Applications}, 27:275--283, 1987.

\bibitem[DT06]{Derby-Talbot2006}
Ryan Derby-Talbot.
\newblock {\em {H}eegaard splittings of toroidal 3-manifolds}.
\newblock PhD thesis, The University of Texas at Austin, 2006.

\bibitem[Hak68]{haken:68}
W.~Haken.
\newblock {\em Some Results on Surfaces in 3-Manifolds}.
\newblock M.A.A., Prentice Hall, 1968.

\bibitem[Hem01]{hempel:01}
J.~Hempel.
\newblock 3-manifolds as viewed from the curve complex.
\newblock {\em Topology}, 40:631--657, 2001.

\bibitem[HTT]{HTT}
J.~Hass, A.~Thompson, and W.~Thurston.
\newblock Stabilization of {H}eegaard splittings.
\newblock Preprint. Available at {\tt http://arxiv.org/abs/0802.2145}.

\bibitem[Joha]{johnson2}
J.~Johnson.
\newblock Bounding the stable genera of heegaard splittings from below.
\newblock Preprint. Available at {\tt http://arxiv.org/abs/0807.2866}.

\bibitem[Johb]{johnson}
J.~Johnson.
\newblock Flipping and stabilizing {H}eegaard splittings.
\newblock Preprint. Available at {\tt http://arxiv.org/abs/0805.4422}.

\bibitem[Rei33]{reidemeister}
K.~Reidemeister.
\newblock Zur dreidimensionalen {T}opologie.
\newblock {\em Abh. Math. Sem. Univ.}, 11(4):189--194, 1933.

\bibitem[RS99]{rs:99}
Hyam Rubinstein and Martin Scharlemann.
\newblock Genus two {H}eegaard splittings of orientable three-manifolds.
\newblock In {\em Proceedings of the Kirbyfest (Berkeley, CA, 1998)}, volume~2
  of {\em Geom. Topol. Monogr.}, pages 489--553 (electronic). Geom. Topol.
  Publ., Coventry, 1999.

\bibitem[Sch]{ScharlemannSurvey}
M.~Scharlemann.
\newblock Heegaard splittings of compact 3-manifolds.
\newblock Preprint. Available at {\tt http://arxiv.org/abs/math/0007144v1}.

\bibitem[Sch96]{schultens:96}
J.~Schultens.
\newblock The stabilization problem for {H}eegaard splittings of {S}eifert
  fibered spaces.
\newblock {\em Topology and its Applications}, 73:133 -- 139, 1996.

\bibitem[Sed97]{sedgwick:97}
E.~Sedgwick.
\newblock An infinite collection of {H}eegaard splttings that are equivalent
  after one stabilization.
\newblock {\em Math. Ann.}, 308:65--72, 1997.

\bibitem[Sin33]{singer}
James Singer.
\newblock Three-dimensional manifolds and their {H}eegaard diagrams.
\newblock {\em Trans. Amer. Math. Soc.}, 35(1):88--111, 1933.

\bibitem[ST93]{st:93}
Martin Scharlemann and Abigail Thompson.
\newblock Heegaard splittings of {$({\rm surface})\times I$} are standard.
\newblock {\em Math. Ann.}, 295(3):549--564, 1993.

\bibitem[ST94]{st:94}
M.~Scharlemann and A.~Thompson.
\newblock Thin position for 3-manifolds.
\newblock {\em A.M.S. Contemporary Math.}, 164:231--238, 1994.

\end{thebibliography}

\end{document}